\theoremstyle{definition}
\theoremstyle{plain}
\newtheorem{theorem}{Theorem}
\newtheorem{proposition}{Proposition}
\newtheorem{lemma}{Lemma}
\theoremstyle{definition}
\theoremstyle{remark}
\newtheorem{remark}{Remark}
\newcommand{\Tcal}{\mathcal{T}}
\newcommand{\Scal}{\mathcal{S}}
\newcommand{\Zbb}{\mathbb{Z}}
\newcommand{\Ucal}{\mathcal{U}}
\newcommand{\Rbb}{\mathbb{R}}
\newcommand{\Rcal}{\mathcal{R}}
\newcommand{\Jcal}{\mathcal{J}}
\newcommand{\diag}{\mathbf{diag}}
\newcommand{\ext}{\mathbf{ext}}
\DeclareMathOperator{\Rot}{\mathbf{Rota}}
\newcommand{\Fold}{\mathbf{Fold}}
\DeclareMathOperator{\ArcTan}{\mathbf{atan}}
\newcommand{\matQi}{{Q^{(i)}}}
\newcommand{\vecciT}{{c^{(i)T}}}
\newcommand{\scldi}{{d^{(i)}}}
\title{Compact Disjunctive Approximations to Nonconvex Quadratically Constrained Programs}
\author[*]{Hongbo Dong}
\author[*]{Yunqi Luo}
\affil[*]{Department of Mathematics and Statistics, Washington State University, Pullman WA 99164}
\date{November 19, 2018}                                           
\begin{document}
\maketitle

\begin{abstract}
Decades of advances in mixed-integer linear programming (MILP) and recent development in 
mixed-integer second-order-cone programming (MISOCP) have translated very mildly to progresses in 
global solving nonconvex mixed-integer quadratically constrained programs (MIQCP).
In this paper we propose a new approach, namely Compact Disjunctive Approximation (CDA), to
approximate nonconvex MIQCP to arbitrary precision by \textit{convex} MIQCPs, which can be solved 
by MISOCP solvers. For nonconvex MIQCP with $n$ variables and $m$ general quadratic constraints, 
our method yields relaxations with at most $O(n\log(1/\varepsilon))$ number of continuous/binary variables 
and linear constraints, together with $m$ \textit{convex} quadratic constraints, where $\varepsilon$ is the approximation accuracy.
The main novelty of our method lies in a very compact lifted mixed-integer formulation for approximating the (scalar) square function.
This is derived by first embedding the square function into the boundary of a three-dimensional second-order cone,
and then exploiting rotational symmetry in a similar way as in the construction of BenTal-Nemirovski approximation.
We further show that this lifted formulation characterize the union of finite number of simple convex sets, which naturally relax the square
function in a piecewise manner with properly placed knots. 
We implement (with JuMP) a simple adaptive refinement algorithm. Numerical 
experiments on synthetic instances used in the literature show that our prototypical implementation (with hundreds of lines of Julia code) can 
already close a significant portion of gap left by various state-of-the-art global solvers on more difficult instances, indicating strong 
promises of our proposed approach.
\end{abstract}

\section{Introduction and Summary of Contributions}
In this paper we propose a new approach towards globally solving nonconvex mixed-integer quadratically constrained programming (MIQCP) in the following form
\begin{equation}\label{MIQCP} 
\begin{aligned}
\min_{x,\zeta} & \quad f^{(0)}(x, \zeta) := x^T Q^{(0)} x + c^{(0)T} x +d^{(0)T}\zeta \\
s.t.& \quad f^{(i)}(x, \zeta) := x^T \matQi x + \vecciT x + d^{(i)T} \zeta - f^{(i)} \leq 0, \quad i=1,...,m, \\
& \quad x_j \in [\ell_j, u_j], \ \forall j = 1,...,n, \quad \zeta \in \{0,1\}^s.\\
\end{aligned}\tag{MIQCP}
\end{equation}
For any $i=0,...,m$, $Q^{(i)}$ is an $n\times n$ real symmetric matrix with (potentially) both positive and negative eigenvalues, $c^{(i)}$ and $d^{(i)}$ are 
vectors in the Euclidean space $\Rbb^n$ and $\Rbb^s$, respectively. We further assumed that all continuous variables are bounds, 
i.e., $\ell_j >-\infty$ and $u_j < +\infty$.

\ref{MIQCP} is a very expressive problem class. The Stone-Weierstrass Theorem states that any continuous
function on a closed and bounded region of $\Rbb^n$ can be approximated arbitrarily close with a polynomial function, which can be further reformulated as a quadratic
system with additional variables and quadratic constraints. Nonconvex quadratic constraints also naturally arise in many areas of science and engineering. 
For example, the AC optimal power flow (ACOPF) is a long-standing and fundamental problem in power system optimization. Its rectangular form is a nonconvex QCP 
with continuous variables, e.g., \cite{CainOneillCastillo}. See  \cite{GhaddarMarecekMevissen,Kocuk_Strong_2016,BynumCastilloWatsonLaird2018} for some 
recent development in global optimization methods. Optimization problem such as unit commitment and optimal transmission switching often include additional
integer variables and ACOPF as part of the problem structure, e.g., see  \cite{FuShahidehpourLi2005,BurakDeySun2017}.
Observing the importance of advancing optimization algorithms for MIQCP, in recent years a specialized instance library named QPLIB \cite{QPLIB} (\url{http://qplib.zib.de/})
has been developed to hosts a collection challenging instances from various application areas for benchmarking.

During the last two decades several solvers have been developed to solve the more general problem class of mixed-integer nonlinear programs (MINLP) to global 
optimality. The most well-known examples include BARON \cite{baron17.8.9}, ANTIGONE \cite{ANTIGONE2014},  Couenne\cite{couenne,CouennePaper}, Lindo API \cite{Lindo2009}
and SCIP\cite{GleixnerEtal2018ZR,SCIP-MIQCP-2012,SCIP-2018}. They can be used to solve general MIQCPs.
All of these solvers are based on spatial branch-and-bound with different implementation and choices in cut generation,
branching, bound tightening and domain propagation, etc. However one common and crucial ingredient among all solvers is 
that convex relaxations are usually constructed in a term-wise manner, i.e., for each pair of $(i,j)$ such that the nonlinear term $x_i x_j$ 
exists in the problem, an additional continuous variable, denoted by $X_{ij}$, is introduced and constrained by McCormick inequalities 
(or RLT inequalities)
\begin{equation}\label{RLT}\begin{aligned}
\ell_i x_i + \ell_j x_j - \ell_i \ell_j &\leq X_{ij} \leq 
\ell_i x_j + u_j x_i - \ell_i u_j \\
u_i x_i + u_j x_j - u_i u_j &\leq X_{ij} \leq 
\ell_j x_i + u_i x_j - u_i \ell_j,
\end{aligned}\end{equation}
or related improvements (e.g., edge concave relaxations \cite{Misener_Global_2012}, multi-term cuts \cite{BaoSahinidisTawarmalani2009}, etc.). 
One main difficulty of this term-wise approach is that the problem is lifted into a much higher dimensional space (especially when there are many nonlinear terms),
and the RLT inequalities are usually weak. For effective branch-and-bound, one essential challenge is to derive relaxations with balanced strength and computational
complexity.

Another line of research is to derive strong convex relaxations or even complete convexification by imposing conic constraints. 
The Shor relaxation \cite{Sho87,PolReWol95} is a standard way of deriving semidefinite programming (SDP) relaxations for MIQCPs by lifting to 
the matrix space where the quadratic form $\begin{pmatrix}1\\x\end{pmatrix} 
\begin{pmatrix}1\\x\end{pmatrix}^T$ lies in.
It was observed by Anstreicher \cite{kurt10_QCQP} that the combination of RLT inequalities and the positive 
semidefinite (PSD) constraint often provides much tighter convex (semidefinite) relaxations than each of these approaches alone. This phenomenon is also related to the discovery that 
a large class of nonconvex quadratic program can be equivalently formulated as linear programs over the completely positive cone \cite{Bur09}, as the intersection
of RLT and PSD constraints is related to the doubly nonnegative relaxation \cite{AnsBur07}, which is known to be tight in low dimensions. 
General quadratically constrained programs can also be reformulated as with generalized notion of complete-positivity \cite{Burer2011,PVZ2015}. 
Despite successes in some special cases, e.g., some combinatorial optimization problems 
\cite{RendlRinaldiWiegele10,KrislockMalickRoupin2013,Krislock:2014}, nonconvex quadratic program with linear constraints \cite{BurerChen2013}, in general it is difficult 
to exploit such strong conic relaxations within a branch-and-bound framework due to the lack of stable and scalable SDP algorithms. 
Some attempts were made to project strong relaxations in the lifted space back to the original variable space as cutting planes and cutting surfaces to avoid this problem
\cite{SBL08c,DongNonconvexQP}. There has also been much efforts in developing global solution strategies by exploiting specific problem structure.
For example, see \cite{GhaddarMarecekMevissen,Kocuk_Strong_2016,Kocuk2018,BynumCastilloWatsonLaird2018} for some recent development on the ACOPF problem. 

An important subclass of (\ref{MIQCP}) is when all quadratic forms $Q^{(i)}$ ($i=0,...,m$) are positive semidefinite. We call such problems convex MIQCPs 
(although they are still nonconvex problems).
Recent years have witnessed much interests and progresses in mixed-integer second-order cone programming (MISOCP) (e.g., see \cite{Hande_MISOCP}),
to which convex MIQCP can be reformulated. In leading solvers  MISOCPs are either solved by direct branch-and-bound (by applying interior point methods
to their continuous relaxations) 
or outer approximation (OA) algorithms \cite{DuranGrossmann1986,LeyfferPhDThesis}, where a sequence of mixed-integer linear programs (MILP) are solved. We mention 
an important way of constructing polyhedral approximations to the second-order cone is proposed by Ben-Tal and Nemirovski in \cite{BN01}, which exploits
 rotational symmetry. This approximation is first applied to MISOCP in \cite{Vielma08} and further studied in \cite{KrokhmalVinel2014}. 

Despite significant computational advancement of MILP in the last a few decades and MISOCP in recent years,
it is reasonable to say that such progresses have translated very mildly into progresses in solving general MIQCPs.
In this paper we propose an approach to solve general MIQCPs with moderately larger convex MIQCPs,
and empirically show that our prototypical implementation
can already close a large portion of gap left by leading global solvers on some more difficult instances.  We now summarize our 
proposed approach in the rest of this section.

The first step is to convexify all nonconvex quadratic constraints by  diagonal perturbation. That is, 
for all $i=0,...,m$ such that $Q^{(i)}$ is not positive semidefinite, to compute vector $\delta^{(i)} \in \Rbb^n$ such that 
$\matQi + \diag(\delta^{(i)})$ is. This can be done by letting  $\diag(\delta^{(i)}) = \tau \cdot I$ where $I$ is the identity matrix and 
$\tau$ is the absolute value of the most negative eigenvalue of $\matQi$. Alternatively, as described in Section \ref{sec:adaptive_refine},
one can achieve this by solving structured SDPs.
By introducing auxiliary variables $y_j = x_j^2$, (\ref{MIQCP}) is equivalently written as
\begin{equation}\label{MIQCP-ref}
\begin{aligned}
\min_{x \in \Rbb^n, \ \zeta\in \{0,1\}^s} & \quad  x^T \left(Q^{(0)} + \diag(\delta^{(0)}) \right) x + c^{(0)T} x +d^{(0)T} \zeta - \delta^{(0)T} y \\
s.t.& \quad x^T \left(\matQi + \diag(\delta^{(i)})\right) x + \vecciT x + d^{(i)T} \zeta \leq f^{(i)} + \delta^{(i)T} y, \quad i=1,...,m, \\
& \quad (x_j, y_j) \in \Scal_{[\ell_j, u_j]}, 
\end{aligned}
\end{equation}
where 
\[
\Scal_{[\ell, u]} := \left\{ (x,y) \in \Rbb^2 \ \middle| \  y=x^2, x \in [\ell,u]\right\}.
\]
Note that all of the nonconvexity in the continuous variables are now ``packed" into sets $\left\{\Scal_{[\ell_j, u_j]}\right\}_{j=1}^n$. 
This reformulation has already been adopted in work of Saxena, Bonami and Lee \cite{SBL08c,SBL08b}.

The main novelty of our paper is to develop two \textit{compact disjunctive approximation} sets to $\Scal_{[\ell,u]}$, denoted by 
$D_{\nu}(\ell,u)$ and $D^+_{\nu}(\ell,u)$, where $\nu$ is some positive integer controlling the approximation accuracy. 
As $\nu \mapsto +\infty$, both approximation sets converge to $\Scal_{[\ell,u]}$ uniformly. These approximation sets admit 
integer formulations that are very economical. By replacing $\Scal_{[\ell_j, u_j]}$ in (\ref{MIQCP-ref}) for all $j$ with 
$D_{\nu}(\ell_j,u_j)$ or $D^+_{\nu}(\ell_j,u_j)$ and properly 
chosen $\nu$, we can approximately solve general (\ref{MIQCP}) by solving moderately larger convex MIQCPs.


The rest of our paper proceeds as follows. In Section \ref{construct} we present the main construction of our approximation
sets $D_\nu(\ell, u)$ and $D^+_\nu(\ell, u)$ by embedding into a three-dimensional second-order cone and exploiting 
rotational symmetry. In Section \ref{sec:mip} we present integer 
formulations for these two approximation sets. We further show in Section \ref{sec:disjunctive} that they characterize the union of finitely many 
simple sets naturally approximate the square function in a piecewise manner. In Section \ref{sec:adaptive_refine}
and \ref{sec:comp} we describe a simple adaptive refinement algorithm and empirically show the promises of our approach with 
numerical results. We conclude the paper with discussions and future work in Section \ref{sec:conclude}. Throughout our paper $\Rbb$ is the 
set of real numbers and $\Zbb$ is the set of all integers. $\|\bullet\|$ denotes the Euclidean norm in $\Rbb^n$ unless stated otherwise.

%

\section{Construction of Approximation Sets}\label{construct}
We start by a simple observation that $y = x^2$ can be equivalently written as
\begin{align}
x^2 + \left(\frac{y-1}{2}\right)^2 & =  \left(\frac{y+1}{2}\right)^2. \label{ineq_concave}
\end{align}
In other words, for any $\ell \leq u$, the linear transformation $\Tcal: \Rbb^2 \mapsto \Rbb^3$ such that 
\[
\Tcal(x,y) = \left(x, \frac{y-1}{2}, \frac{y+1}{2} \right)
\]
defines a bijection between $\Scal_{[\ell,u]}$ and set 
\begin{equation}\label{set:embed}
\left\{(x, v, w) \in [\ell,u ]\times \Rbb^2 \ \middle| \ x^2 + v^2 = w^2, w \geq 0, w - v = 1\right\}.
\end{equation}
Note the set in (\ref{set:embed}) is the intersection of the \textit{boundary} of a three-dimensional second-order cone and an 
affine set. See Figure \ref{fig:embedding} for a graphical illustration where the thick black curve represents the intersection. 
We will now construct approximation sets to $\Scal_{[\ell,u]}$ 
by exploiting this embedding and the rotational symmetry of (the boundary of) the second-order cone. 
Our symmetry-exploiting approximation is inspired by the well-known 
BenTal-Nemirovski approximation \cite{BN01}, which is a lifted polyhedral relaxation of the 
\textit{convex} second-order cone, while we directly address the nonconvex set $\Scal_{[\ell,u]}$.


\begin{figure}[htbp] 
\begin{center}
\includegraphics[width=4in, height=3in]{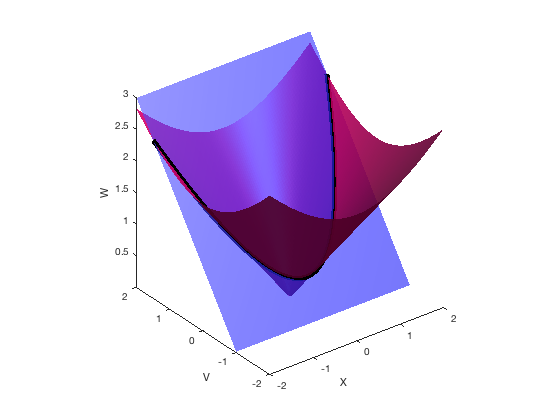}
\caption{Embedding $y=x^2$ into the boundary of second-order cone}
\label{fig:embedding}
\end{center}
\end{figure}


We employ a sequence of ``rotation" and ``folding" transformations.
A transformation, denoted by $\Rot(\bullet; \theta): \Rbb^2 \mapsto \Rbb^2$, that rotates a vector clockwisely by an angle $\theta$ 
is:
\[
\Rot \left([v,w]^T; \theta\right) = \begin{bmatrix}\cos(\theta) & \sin(\theta) \\ -\sin(\theta) & \cos(\theta) \end{bmatrix} 
\begin{bmatrix} v \\ w \end{bmatrix}.
\]
A nonsmooth operation $\Fold(\bullet): \Rbb^2 \mapsto \Rbb\times \Rbb_+$ that ``folds" along the second
 dimension is:
\[
\Fold \left([v,w]^T\right) = \left[ v, |w| \right]^T .
\]
Obviously both operations preserve the 2-norm in $\Rbb^2$, i.e., for any $v, w\in \Rbb$,
\[
\sqrt{v^2 + w^2} = \left\|\Rot \left([v,w]^T; \theta\right)\right\| = \left\|\Fold \left([v,w]^T\right)\right\|, 
\quad\forall \theta.
\]

Let $\arctan: \Rbb \mapsto (-\frac{\pi}{2}, \frac{\pi}{2})$ be the standard inverse tangent function.
A multiple-valued function that maps a nonzero vector in $\Rbb^2$ to its radian angle is denoted by $\ArcTan:\Rbb^2 \setminus \{0\} \mapsto \Rbb$ where 
\begin{equation}\label{def:ArcTan}
\ArcTan([v,w]^T) := \Zbb\cdot 2\pi + \begin{cases}\arctan\left(\frac{w}{v}\right), & \mbox{ if } v > 0, \\
-\frac{\pi}{2},& \mbox{ if } v = 0, w < 0, \\
\arctan\left(\frac{w}{v}\right) - \pi, & \mbox{ if } v < 0, \\
\frac{\pi}{2},& \mbox{ if } v = 0, w > 0. \\
\end{cases} 
\end{equation}
The following lemma is geometrically straightforward and requires no proof.
\begin{lemma}\label{lem:rotflip}
Let $\theta_1$ and $\theta_2$ be angles such that $\theta_1 \leq \theta_2$ and 
$ \theta_2 - \theta_1 \leq 2\pi$.  Define 
$\theta_{mid} :=  (\theta_1 + \theta_2)/2$ and $\theta_d :=  \theta_2 - \theta_1$. 
Let $[v,w]^T$ be a nonzero vector in $\Rbb^2$, the following three conditions are equivalent:
\begin{enumerate}
\item $\ArcTan([v,w]^T) \subseteq [\theta_1, \theta_2] +\Zbb\cdot 2\pi $; 
\item $\ArcTan\left[\Rot \left([v,w]^T; \theta_{mid}\right)\right]  \subseteq [-\theta_d/2, \theta_d/2] + \Zbb\cdot 2\pi$; 
\item $\ArcTan\left[ \Fold\circ \Rot \left([v,w]^T; \theta_{mid}\right)\right] \subseteq [0, \theta_d/2] + \Zbb\cdot 2\pi$.
\end{enumerate}
\end{lemma}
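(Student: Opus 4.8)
The plan is to reduce all three statements to elementary bookkeeping about the angle of a planar vector. The two ingredients are: (a) since $\Rot(\bullet;\theta)$ rotates clockwise by $\theta$, it subtracts $\theta$ from the angle, so, as cosets of $\Zbb\cdot 2\pi$, $\ArcTan\!\left(\Rot([v,w]^T;\theta)\right) = \ArcTan([v,w]^T) - \theta$; and (b) $\Fold$ is the identity when $w\ge 0$ and reflection across the first coordinate axis when $w<0$, hence it negates the angle exactly when $w<0$. Both $\Rot$ and $\Fold$ preserve the Euclidean norm, so they send nonzero vectors to nonzero vectors and every $\ArcTan$ in the statement is well defined. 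From (a), the equivalence $(1)\Leftrightarrow(2)$ is immediate: translating both sides of the containment in $(1)$ by $-\theta_{mid}$ (a bijection of $\Rbb$ commuting with the $\Zbb\cdot 2\pi$ action, hence preserving containment) replaces $[\theta_1,\theta_2]$ by $[\theta_1-\theta_{mid},\theta_2-\theta_{mid}]=[-\theta_d/2,\theta_d/2]$ and the left side by $\ArcTan\!\left(\Rot([v,w]^T;\theta_{mid})\right)$.

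For $(2)\Leftrightarrow(3)$, set $[v',w']^T := \Rot([v,w]^T;\theta_{mid})$ and note $0\le\theta_d/2\le\pi$ by the hypothesis $\theta_2-\theta_1\le 2\pi$. Because the intervals $[-\theta_d/2,\theta_d/2]$ and $[0,\theta_d/2]$ have length $\le 2\pi$, containment of a full coset in either of them modulo $\Zbb\cdot 2\pi$ is equivalent to the vector having \emph{some} representative angle in that interval; when $\theta_d=2\pi$ both target sets are all of $\Rbb$ and all conditions hold vacuously. Now split on the sign of $w'$. If $w'\ge 0$, then $[v',w']^T$ has a representative angle $\alpha\in[0,\pi]$ and $\Fold([v',w']^T)=[v',w']^T$, so $(2)$ says $\alpha\in[-\theta_d/2,\theta_d/2]$, which for $\alpha\ge 0$ is the same as $\alpha\in[0,\theta_d/2]$, i.e.\ $(3)$. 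If $w'<0$, then $[v',w']^T$ has a representative $\alpha\in(-\pi,0)$ while $\Fold([v',w']^T)=[v',-w']^T$ has representative $-\alpha\in(0,\pi)$, so $(2)$ says $\alpha\in[-\theta_d/2,0)$, i.e.\ $-\alpha\in(0,\theta_d/2]$, which is again $(3)$. Each implication above is an equivalence, so both directions are covered; the boundary cases $v'=0$ (angles $\pm\pi/2$, the branches singled out in (\ref{def:ArcTan})) are subsumed by this dichotomy.

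I do not expect a genuine obstacle — as the text notes, the lemma is geometrically transparent. The only points that need care are: treating $\ArcTan$ consistently as a coset of $\Zbb\cdot 2\pi$ and arguing through a single representative; invoking the hypothesis $\theta_2-\theta_1\le 2\pi$ at exactly the right place, since $\theta_d/2\le\pi$ is what makes ``the target set contains the coset'' equivalent to ``some representative angle lies in the target interval'' and hence lets the fold step identify $(2)$ with $(3)$; and a one-line check of the degenerate cases $\theta_d\in\{0,2\pi\}$ and $v'=0$.
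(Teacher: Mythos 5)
The paper gives no proof of this lemma at all---it is stated with the remark that it ``is geometrically straightforward and requires no proof''---so there is nothing to compare against except the intended geometric picture, and your argument is a correct formalization of exactly that picture: rotation by $\theta_{mid}$ translates the angle coset by $-\theta_{mid}$, giving $(1)\Leftrightarrow(2)$, and the fold negates the angle precisely when the second coordinate is negative, giving $(2)\Leftrightarrow(3)$ after the sign split. Two small inaccuracies in your commentary are worth fixing, though neither breaks the proof. First, when $\theta_d=2\pi$ the set $[0,\theta_d/2]+\Zbb\cdot 2\pi=[0,\pi]+\Zbb\cdot 2\pi$ is \emph{not} all of $\Rbb$, so condition (3) does not hold ``vacuously''; it holds because the folded vector has nonnegative second coordinate and hence a representative angle in $[0,\pi]$---and in any case your general dichotomy already covers $\theta_d=2\pi$, so the parenthetical can simply be dropped. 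Second, the equivalence ``the coset is contained in the target set'' $\Leftrightarrow$ ``some representative lies in the base interval'' follows from $2\pi$-periodicity of the target alone and does not need the length bound; the hypothesis $\theta_d\le 2\pi$ is actually used one step later, to conclude that the \emph{specific} representative you chose (in $[0,\pi]$ when $w'\ge 0$, in $(-\pi,0)$ when $w'<0$) lies in $[-\theta_d/2,\theta_d/2]$ whenever some representative does. With those attributions corrected, the proof is complete and supplies the justification the paper omits.
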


For now on we use $\theta_{min}$ and $\theta_{max}$ to denote the minimal and maximal angles of $[x, \frac{y-1}{2}]^T$ for $(x,y) \in \Scal_{[\ell,u]}$,  i.e.,
\begin{equation}\label{def:theta_minmax}
\theta_{min} := \begin{cases}
\arctan \frac{\ell^2 - 1}{2\ell} & \mbox{ if } \ell > 0 \\
-\frac{\pi}{2} & \mbox{ if } \ell = 0 \\
\arctan \frac{\ell^2 - 1}{2\ell} - \pi & \mbox{ if } \ell < 0 
\end{cases}, \qquad
\theta_{max} := \begin{cases}
\arctan \frac{u^2 - 1}{2 u} & \mbox{ if } u > 0 \\
-\frac{\pi}{2} & \mbox{ if } u = 0 \\
\arctan \frac{u^2 - 1}{2 u} - \pi & \mbox{ if } u < 0 
\end{cases}.
\end{equation}
See Figure \ref{fig:rot1}. 
Note that $\theta_{min}$ and $\theta_{max}$ both take values in $(-\frac{3}{2}\pi, \frac{\pi}{2})$. Such definitions are especially convenient for our purposes as 
the angle $-\frac{3}{2}\pi$ (and $\frac{\pi}{2}$) corresponds to the recession direction of the epigraph of the square function. We will further define 
\begin{equation}\label{def:theta_mid}
\theta_{mid} := \frac{\theta_{max} + \theta_{min}}{2}, \qquad \theta_{d} := \theta_{max} - \theta_{min}.
\end{equation}
For a properly chosen positive integer $\nu$, we then create a collection of lifted variables $\{(\xi_j, \eta_j)\}_{j=1}^\nu$ and 
link them with $(x,y)$ by a sequence of folding and rotations.
\begin{align}
\begin{bmatrix} \xi_1 \\ \eta_1 \end{bmatrix} &= \Fold \circ\Rot\left(\begin{bmatrix} x \\ (y-1)/2 \end{bmatrix}; \theta_{mid}\right), \label{def:xieta1}& \\ 
\begin{bmatrix} \xi_{j+1} \\ \eta_{j+1} \end{bmatrix} &= \Fold \circ \Rot\left(\begin{bmatrix} \xi_j \\ \eta_j \end{bmatrix}; \theta_d/(2^{j+1}) \right), 
 \quad j=1,...,\nu - 1. \label{def:xieta2}
\end{align}
Geometrically, $[\xi_j, \eta_j]$ is a ``copy" of $[x, (y-1)/2]^T$ in the angular sector between $0$ and $\theta_d/(2^j)$ after a sequence of rotation and folding 
operations. Figure \ref{fig:rot1}--\ref{fig:rot3} illustrate the angular sectors (shaded area) and set $\{(x,(y-1)/2 \ | \ (x,y) \in \Scal_{[\ell,u]}\}$ after first two transformations.
\begin{figure}[!htb]
\begin{center}
\begin{subfigure}{.45\textwidth}
  \centering
  \includegraphics[width=0.93\linewidth]{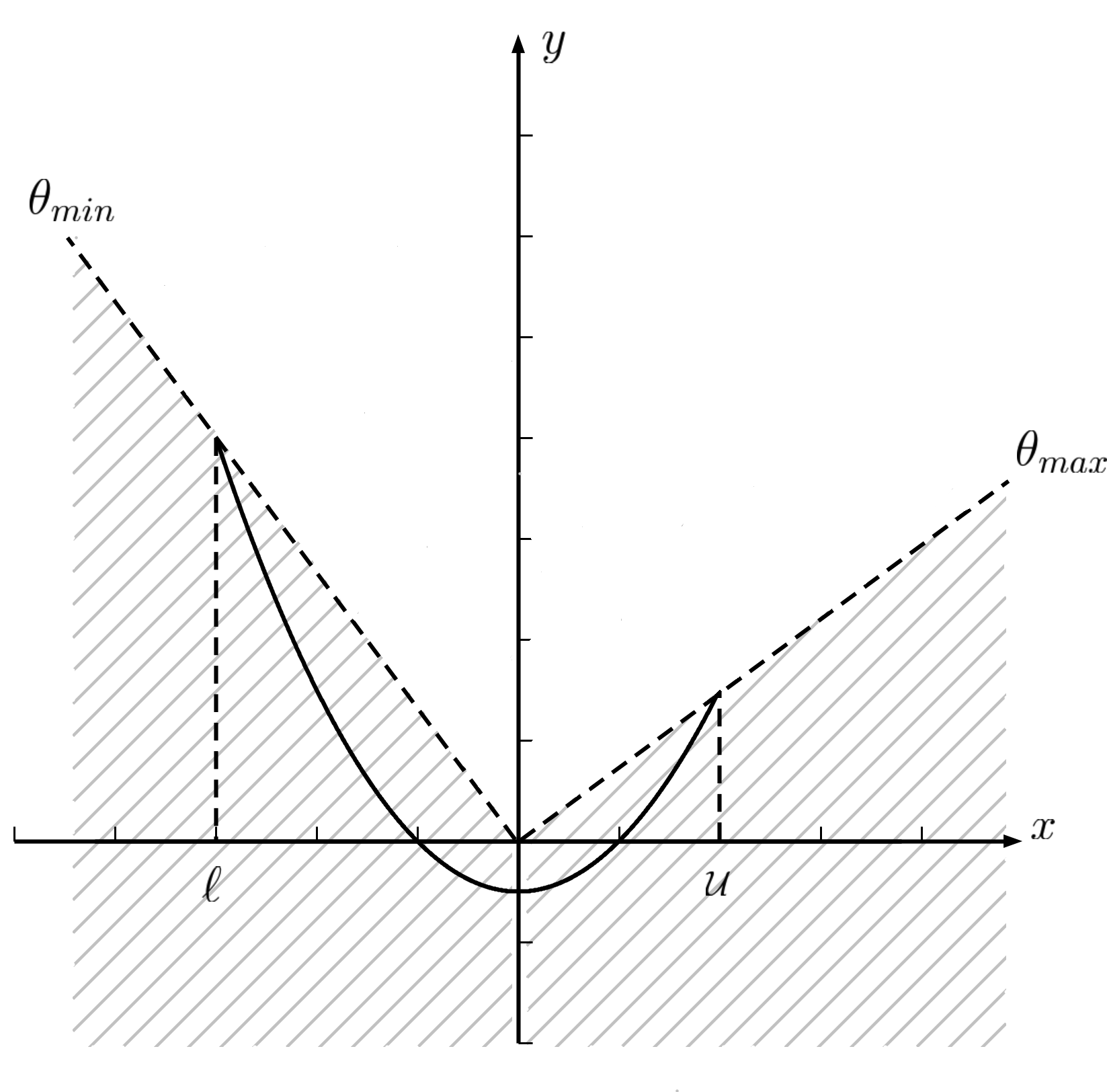}
  \caption{$\theta_{min}$ and $\theta_{max}$ for $(x,\frac{y-1}{2})$ where $(x,y) \in \Scal_{[\ell,u]}$}
  \label{fig:rot1}
\end{subfigure}%
\begin{subfigure}{.45\textwidth}
  \centering
  \includegraphics[scale = 0.15]{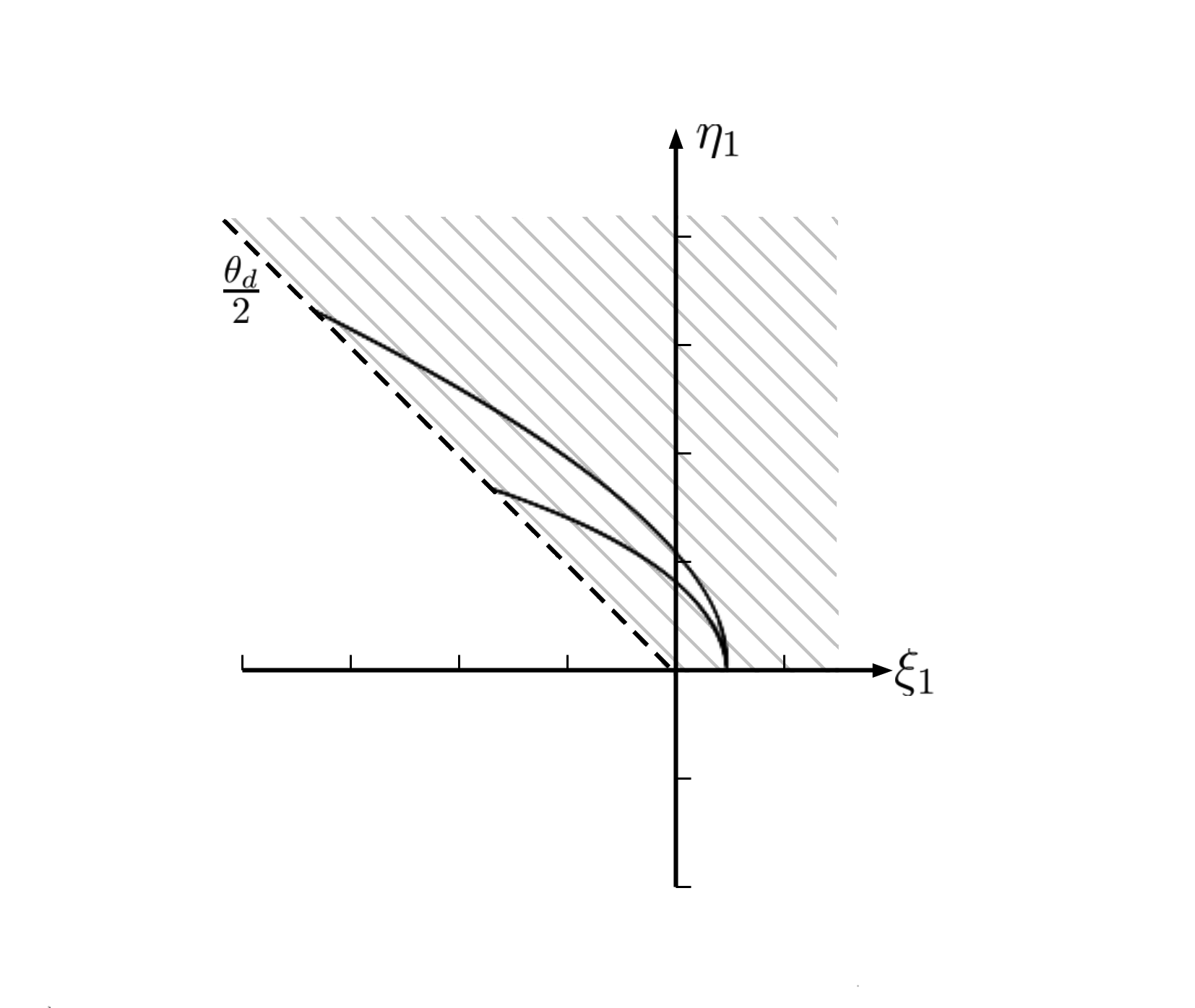}
  \caption{$(\xi_1, \eta_1)$}
  \label{fig:rot2}
\end{subfigure}
\begin{subfigure}{.45\textwidth}
  \centering
  \includegraphics[width=0.88\linewidth]{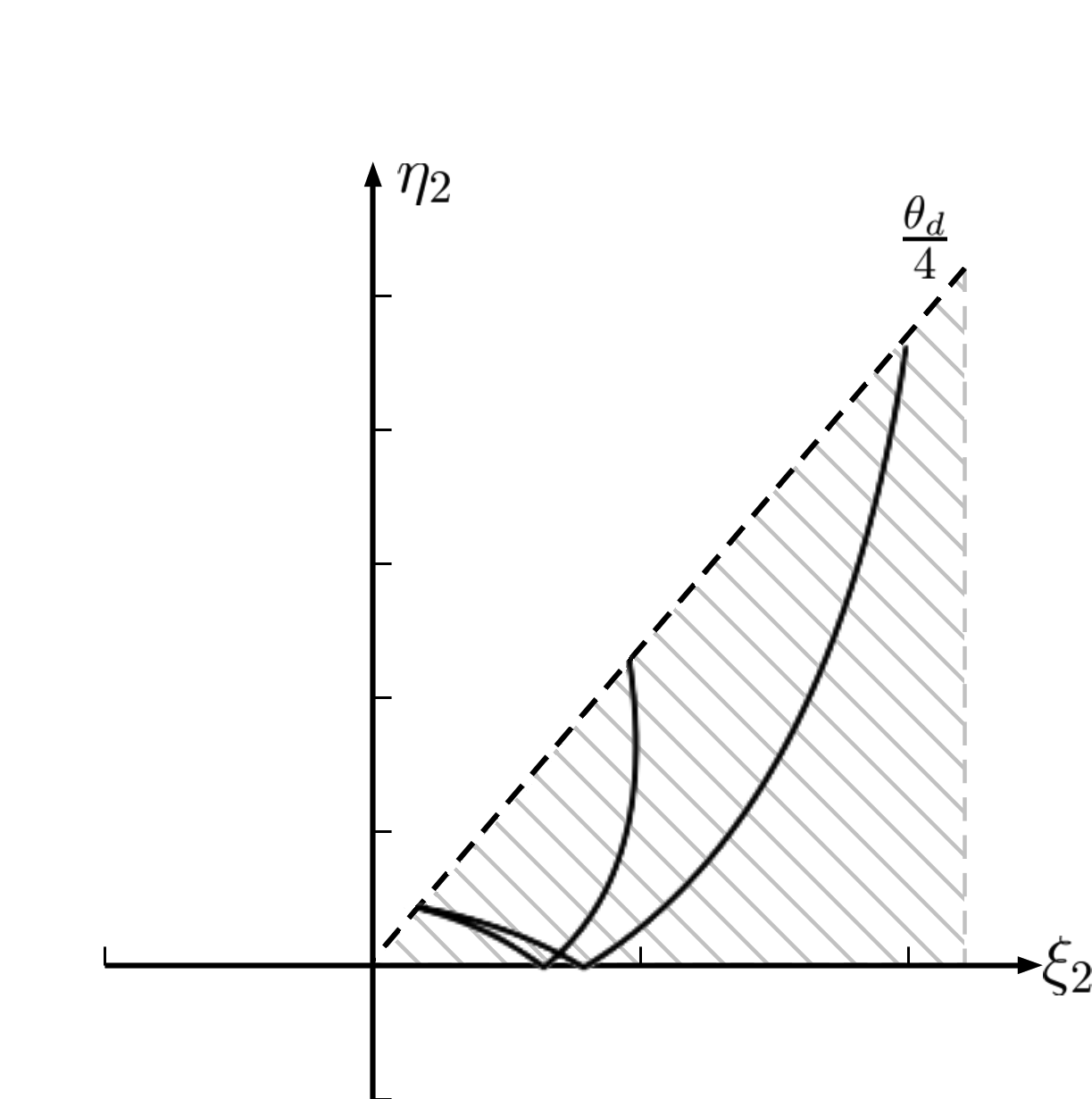}
  \caption{$(\xi_2, \eta_2)$}
  \label{fig:rot3}
\end{subfigure}%
\begin{subfigure}{.45\textwidth}
  \centering
  \includegraphics[width=\linewidth]{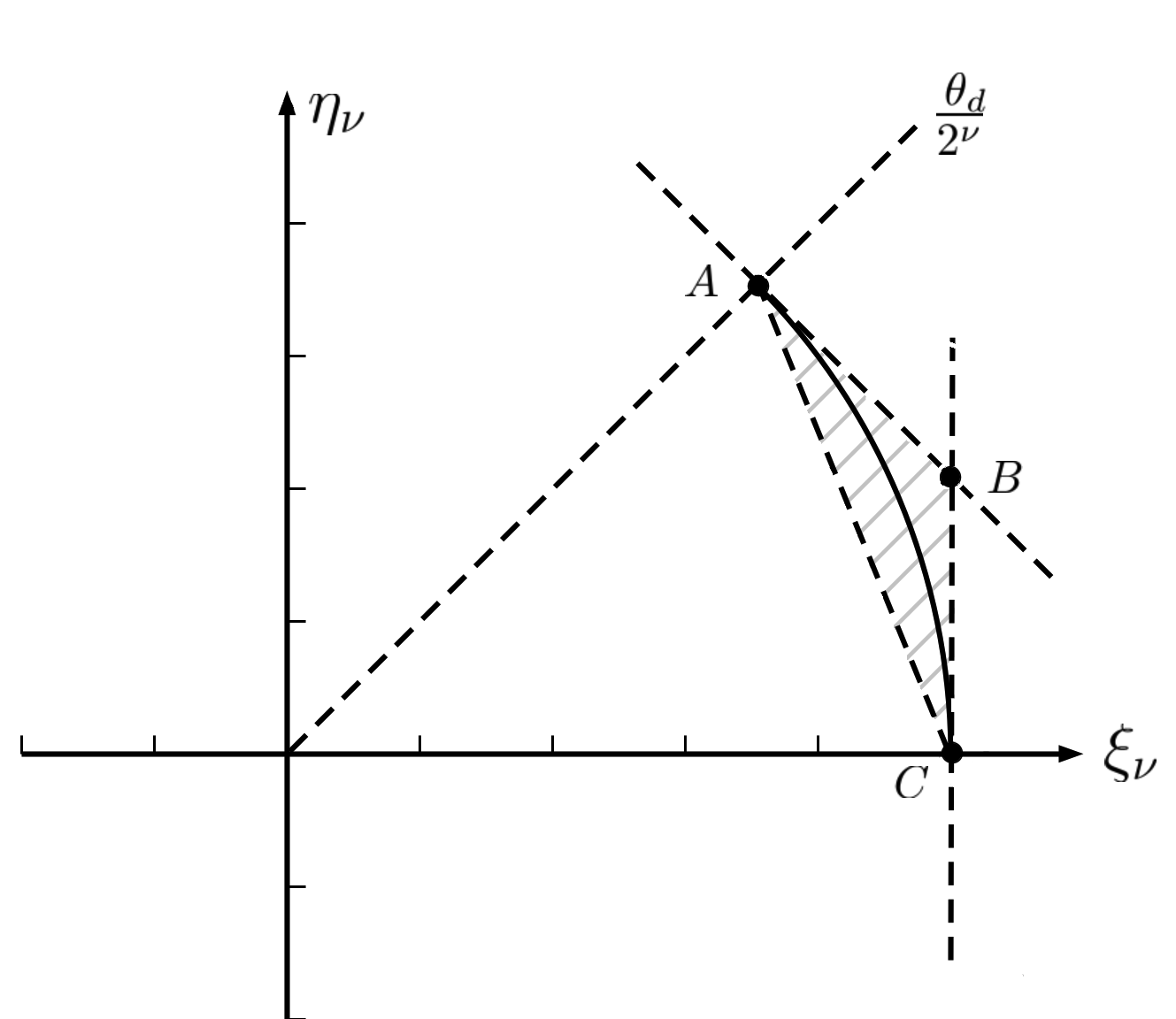}
  \caption{Valid constraints on the triplet $(\xi_\nu, \eta_\nu, y)$.}
  \label{fig:rot4}
\end{subfigure}
\caption{Rotation and folding operations applied to $\{ (x,(y-1)/2) \ | \ (x,y) \in \Scal_{[\ell, u]}\} $ }
\label{fig:rot}
\end{center}
\end{figure}

We then seek to add valid constraints on the last pair of lifted variables $(\xi_\nu, \eta_\nu)$. By the invariance of 
2-norm of $\Fold$ and $\Rot$ it is easy to see that (\ref{def:xieta1}) and (\ref{def:xieta2}) imply
\begin{equation}\label{radius_square}
\xi_\nu^2 + \eta_\nu^2 = x^2 + \left(\frac{y-1}{2}\right)^2.
\end{equation}
If $(x,y) \in \Scal_{[\ell,u]}$, by observation (\ref{ineq_concave}) this quantity should equal $\left(\frac{y+1}{2}\right)^2$. 
Together with the angular restriction of $(\xi_\nu, \eta_\nu)$,  for any fixed $y$, $(\xi_\nu, \eta_\nu)$ must fall on
the arc $\widetilde{AC}$ as in Figure \ref{fig:rot4}, hence lies in the convex triangular region $\Delta ABC$, where 
\[
A =  \left(\frac{y+1}{2}\cos(\theta_d / 2^\nu), \frac{y+1}{2}\sin(\theta_d / 2^\nu)\right), 
\quad B =  \left(\frac{y+1}{2}, \frac{y+1}{2 \cos(\theta_d/(2^{\nu+1}))} \right), \quad C = \left(\frac{y+1}{2}, 0\right).
\]
constructed by two tangent lines and one secant line. This restriction can be described by three valid inequalities
of $(\xi_\nu, \eta_\nu)$ and $y$:
\begin{align}
\xi_\nu \cos\left(\frac{\theta_{d}}{2^{\nu+1}}\right) + \eta_\nu \sin\left(\frac{\theta_{d}}{2^{\nu+1}}\right) &\geq 
\frac{y+1}{2}  \cos\left(\frac{\theta_{d}}{2^{\nu+1}}\right), \label{ineq:secant}\\
\xi_\nu \cos\left(\frac{\theta_{d}}{2^{\nu}}\right) + \eta_\nu \sin\left(\frac{\theta_{d}}{2^{\nu}}\right) &\leq 
\frac{y+1}{2}, \label{ineq:tangent1}\\
\xi_\nu &\leq  \frac{y+1}{2}.\label{ineq:tangent2}
\end{align}
Let $B_{[\ell,u]} \subseteq \Rbb^2$ be a (compact) superset of $\Scal_{[\ell,u]}$ constructed by simple bounds
and the RLT inequalities (\ref{RLT}),
\begin{equation*}
B_{[\ell,u]}:= \left\{(x,y) \in \Rbb^2 \ \middle| \ x \in [\ell,u],  \ 0 \leq y \leq (\ell+u) x - \ell u \right\}. 
\end{equation*}
We define two relaxation sets of $\Scal_{[\ell,u]}$ as follows:
\begin{align*}
D_\nu(\ell, u) & := \left\{ (x, y) \in B_{[\ell, u]} \ \middle| \ 
\begin{aligned}
\exists \{(\xi_j, \eta_j)\}_{j=1}^\nu \mbox{ such that } \\
(\ref{def:xieta1}), \ (\ref{def:xieta2}), \ (\ref{ineq:secant}), \ (\ref{ineq:tangent1}), \ (\ref{ineq:tangent2})
\end{aligned}
\right\}, \\
D^+_\nu(\ell, u) & := \left\{ (x, y) \in B_{[\ell, u]} \ \middle| \ 
\begin{aligned}
\exists \{(\xi_j, \eta_j)\}_{j=1}^\nu \mbox{ such that } \\
(\ref{def:xieta1}), \ (\ref{def:xieta2}), \ (\ref{ineq:secant}), \ y \geq x^2
\end{aligned}
\right\}.
\end{align*}

Note that the definitions of $D_{\nu}(\ell, u)$ and $D^+_{\nu}(\ell, u)$ involve the nonsmooth folding operations,  
hence are not immediately admissible to optimization solvers. We will leave their integer formulations for the next section, while
providing a formal, algebraic proof of the validity of relaxation and their approximation accuracy in the following theorem. 
As $\nu$ increases, both $D_\nu(\ell,u)$ and $D^+_\nu(\ell,u)$ converge to $\Scal_{[\ell,u]}$ very rapidly.
\begin{theorem}\label{thm:approx}
For any $\ell, u$ such that $-\infty < \ell \leq u < +\infty$, we have 
\[\Scal_{[\ell,u]} \subseteq D^+_\nu(\ell,u) \subseteq D_\nu(\ell,u).\]
Suppose that $\nu \geq 2$, then for any $(x,y) \in D_\nu(\ell,u)$, we have the following bounds:
\begin{equation*}
|\sqrt{y}-|x|| \leq \frac{\max(\ell^2,u^2) + 1}{2^{\nu-1}}, \qquad
|y-x^2| \leq \frac{\left[\max(\ell^2,u^2) + 1\right]^2}{2^{2\nu-2}}.
\end{equation*}
Further if $(x,y) \in D^+_\nu(\ell,u)$, then $y \geq x^2$ and the same bounds hold.
\end{theorem}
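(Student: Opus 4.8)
The plan is to prove the three assertions in turn, the bounds on $D^+_\nu$ following at once from the inclusion $D^+_\nu\subseteq D_\nu$ and the bounds on $D_\nu$. The organizing identity, valid for every candidate point, comes from (\ref{radius_square}): with $r:=\frac{y+1}{2}$ and $\rho:=\sqrt{\xi_\nu^2+\eta_\nu^2}$, and using $r^2=y+\left(\frac{y-1}{2}\right)^2$, we get $r^2-\rho^2=y-x^2$. So pinching $x^2$ around $y$ is the same as pinching $\rho$ around $r$, and the whole proof amounts to trapping $\rho$ in a thin annulus about $r$. I abbreviate $c:=\cos\frac{\theta_d}{2^{\nu+1}}$, $s:=\sin\frac{\theta_d}{2^{\nu+1}}$, $M:=\max(\ell^2,u^2)$, and note $r\ge\tfrac12$ for every $(x,y)\in B_{[\ell,u]}$ since $y\ge0$ there.

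\textbf{The inclusions.} For $\Scal_{[\ell,u]}\subseteq D^+_\nu(\ell,u)$, fix $(x,y)\in\Scal_{[\ell,u]}$: then $(x,y)\in B_{[\ell,u]}$ (from $0\le x^2$ and $(x-\ell)(u-x)\ge0$) and $y\ge x^2$ with equality, so only (\ref{ineq:secant}) must be verified. Since $[x,(y-1)/2]^T$ is nonzero with $\ArcTan$ inside $[\theta_{min},\theta_{max}]+\Zbb\cdot2\pi$ by the definition of $\theta_{min},\theta_{max}$, I would iterate the implication $(1)\Rightarrow(3)$ of Lemma \ref{lem:rotflip} — the first application with interval $[\theta_{min},\theta_{max}]$ and midpoint $\theta_{mid}$, the later ones with interval $[0,\theta_d/2^{j}]$ and midpoint $\theta_d/2^{j+1}$, exactly as in (\ref{def:xieta1})--(\ref{def:xieta2}) — to obtain that $(\xi_\nu,\eta_\nu)$ has angle in $[0,\theta_d/2^\nu]+\Zbb\cdot2\pi$; since $\eta_\nu\ge0$ and $\theta_d/2^\nu<\pi$, its actual angle $\phi'$ lies in $[0,\theta_d/2^\nu]$. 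With $\rho=r$ (from (\ref{ineq_concave})) this gives $\xi_\nu c+\eta_\nu s=r\cos(\phi'-\tfrac{\theta_d}{2^{\nu+1}})\ge rc$, i.e.\ (\ref{ineq:secant}). For $D^+_\nu(\ell,u)\subseteq D_\nu(\ell,u)$: if $(x,y)\in D^+_\nu(\ell,u)$, then $y\ge x^2$ and the identity above give $\rho\le r$, so $\xi_\nu\le\rho\le r$ is (\ref{ineq:tangent2}) and $\xi_\nu\cos\frac{\theta_d}{2^\nu}+\eta_\nu\sin\frac{\theta_d}{2^\nu}\le\rho\le r$ is (\ref{ineq:tangent1}).

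\textbf{The bounds on $D_\nu(\ell,u)$.} Take $(x,y)\in D_\nu(\ell,u)$ with $\nu\ge2$; I may assume $\ell<u$ (if $\ell=u$ one checks directly that $D_\nu(\ell,u)=\Scal_{[\ell,u]}$, so the bounds are trivial), whence $0<\theta_d<2\pi$ and $\frac{\theta_d}{2^{\nu+1}}<\frac{\pi}{2^\nu}\le\frac{\pi}{4}$. Cauchy--Schwarz applied to (\ref{ineq:secant}) gives $rc\le\xi_\nu c+\eta_\nu s\le\rho$, hence $y-x^2=r^2-\rho^2\le r^2(1-c^2)$. In the other direction, the outer normals of the half-spaces (\ref{ineq:secant})--(\ref{ineq:tangent2}) positively span $\Rbb^2$, so these three inequalities describe the \emph{bounded} triangle $\Delta ABC$ of Figure \ref{fig:rot4}, whose vertices $A,C,B$ are at distances $r,r,r/c$ from the origin; as $\|\cdot\|^2$ is convex, $\rho\le r/c$, and so $x^2-y=\rho^2-r^2\le r^2(c^{-2}-1)=r^2\tan^2\frac{\theta_d}{2^{\nu+1}}$. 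Combining, $|y-x^2|\le r^2\tan^2\frac{\theta_d}{2^{\nu+1}}$; and since $\left|\sqrt y-|x|\right|\le\sqrt y+|x|$ we get $\left|\sqrt y-|x|\right|\le\sqrt{|y-x^2|}\le r\tan\frac{\theta_d}{2^{\nu+1}}$. It remains to bound the two factors: on $B_{[\ell,u]}$ we have $0\le y\le(\ell+u)x-\ell u\le M$ (the affine function is maximized over $[\ell,u]$ at an endpoint), so $r\le\frac{M+1}{2}$; and $\tan$ is convex on $[0,\pi/4]$ with $\tan0=0$, $\tan\frac{\pi}{4}=1$, so $\tan\frac{\theta_d}{2^{\nu+1}}\le\frac{4}{\pi}\cdot\frac{\theta_d}{2^{\nu+1}}<2^{2-\nu}$ (this is the one place $\nu\ge2$ enters). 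Substituting gives $\left|\sqrt y-|x|\right|<\frac{M+1}{2}\cdot2^{2-\nu}=\frac{M+1}{2^{\nu-1}}$ and $|y-x^2|<\frac{(M+1)^2}{4}\cdot2^{4-2\nu}=\frac{(M+1)^2}{2^{2\nu-2}}$. For $(x,y)\in D^+_\nu(\ell,u)$ nothing further is needed: $y\ge x^2$ is built into its definition, and the bounds carry over through $D^+_\nu(\ell,u)\subseteq D_\nu(\ell,u)$.

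\textbf{Expected obstacles.} The inequality manipulations above are routine; the two steps I expect to need genuine care are (i) discharging the $\Zbb\cdot2\pi$ ambiguity in the iterated use of Lemma \ref{lem:rotflip}, i.e.\ confirming the angle of $(\xi_\nu,\eta_\nu)$ really is confined to $[0,\theta_d/2^\nu]$ rather than a shifted copy — which is exactly where $\eta_\nu\ge0$ and $\theta_d/2^\nu<\pi$ are used — and (ii) verifying that the three inequalities (\ref{ineq:secant})--(\ref{ineq:tangent2}) cut out a \emph{bounded} region, without which the estimate $\rho\le r/c$, obtained by maximizing the convex function $\|\cdot\|^2$ over a polytope at a vertex, is not available.
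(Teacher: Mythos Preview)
Your proposal is correct and follows essentially the same approach as the paper: iterate Lemma~\ref{lem:rotflip} to confine the angle of $(\xi_\nu,\eta_\nu)$, then use the triangle $\Delta ABC$ to pinch $\rho=\sqrt{\xi_\nu^2+\eta_\nu^2}$ between $rc$ and $r/c$, and translate via $r^2-\rho^2=y-x^2$. The only notable (minor) differences are that you verify (\ref{ineq:tangent1})--(\ref{ineq:tangent2}) from $\rho\le r$ by Cauchy--Schwarz rather than by explicit angle parameterization, and you obtain the $|\sqrt{y}-|x||$ bound via $|\sqrt{y}-|x||\le\sqrt{|y-x^2|}$ instead of the paper's $\sqrt{a+b}\le\sqrt a+\sqrt b$ step; both are equally valid and arguably cleaner.
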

\begin{proof}
We first show that $\Scal_{[\ell, u]} \subseteq D^+_\nu(\ell, u)$. For any $(x,y) \in \Scal_{[\ell,u]}$,
by $y=x^2$ and the invariance of 2-norm,
\[
(y/2 + 0.5)^2  =  x^2 + (y/2-0. 5)^2 = \xi_j^2 + \eta_j^2 , \quad \forall j=1,...,\nu.
\]
By construction $\ArcTan([x,y/2-0.5]) \in [\theta_{min}, \theta_{max}] + \Zbb\cdot 2\pi$. Iteratively
applying Lemma \ref{lem:rotflip} yields
\[
\ArcTan([\xi_j, \eta_j]^T) \in [0, \theta_d/2^{j}] + \Zbb\cdot 2\pi,\quad j=1,...,\nu.
\]
Take $j=\nu$, then there exists $\tilde{\theta} \in [0, \theta_d/2^{\nu}]$ such that, 
\[
\xi_\nu = (y/2+0.5)\cos \tilde{\theta}, \qquad \eta_\nu = (y/2+0.5)\sin\tilde{\theta}.
\]
Therefore
\[
\xi_\nu \cos\left(\frac{\theta_{d}}{2^{\nu+1}}\right) + \eta_\nu \sin\left(\frac{\theta_{d}}{2^{\nu+1}}\right)
= (y/2+0.5)\cos\left(\frac{\theta_{d}}{2^{\nu+1}}-\tilde{\theta}\right) \geq (y/2+0.5) \cos\left(\frac{\theta_{d}}{2^{\nu+1}}\right),
\] 
where the last inequality is because 
$\tilde{\theta} \in [0, \theta_d/2^{\nu}] \Rightarrow\left| \frac{\theta_{d}}{2^{\nu+1}}-\tilde{\theta} \right| \leq \left|\frac{\theta_{d}}{2^{\nu+1}}\right|$
and $\nu \geq 2 \Rightarrow \theta_d/(2^{\nu+1}) \in [0, \pi/4]$. Therefore $\Scal_{[\ell,u]} \subseteq D_\nu^+(\ell,u)$. 

We now show $D_\nu^+(\ell,u) \subseteq D_\nu(\ell,u)$. Take any $(x,y) \in D_\nu^+(\ell,u)$ with associated $\{\xi_j,\eta_j\}_{j=1}^\nu$. 
Since 
\[
y\geq x^2 \Rightarrow \left(\frac{y+1}{2}\right)^2 \geq \left(\frac{y-1}{2}\right)^2 + x^2 = \xi_j^2 + \eta_j^2, \quad \forall j=1,...,\nu,
\]
there exists $r \in [0, \frac{y+1}{2}]$ and $\tilde{\theta} \in [0,\theta_d/2^\nu]$ such that 
\[
\xi_\nu = r \cos\tilde{\theta}, \mbox{ and } \eta_\nu = r \sin\tilde{\theta}.
\]
It is then further straightforward to verify that 
\[
\xi_\nu  \cos\left(\frac{\theta_{d}}{2^{\nu}}\right)+ \eta_\nu \sin\left(\frac{\theta_{d}}{2^{\nu}}\right) 
= r \cos\left(\frac{\theta_{d}}{2^{\nu}} -\tilde{\theta}\right) \leq r \leq \frac{y+1}{2},
\]
and
\[
\xi_\nu \leq r \leq \frac{y+1}{2}.
\]
So $D_\nu^+(\ell,u) \subseteq D_\nu(\ell,u)$.

Now suppose $(x,y) \in D_\nu{[\ell,u]}$, and $\left\{[\xi_j, \eta_j]^T\right\}_{j=1}^\nu$ be the associated vectors in $D_\nu(\ell,u)$. 
As in Figure \ref{fig:rot4}, the last three inequalities
imply that $[\xi_\nu, \eta_\nu]^T$ is in the triangle formed by the following three points (depending on $y/2+0.5$),
\begin{align*}
\left[ (y/2+0.5) \cos\left(\frac{\theta_{d}}{2^{\nu}}\right), (y/2+0.5) \sin\left(\frac{\theta_{d}}{2^{\nu}}\right) \right]^T, \\
\left[ y/2+0.5 , (y/2+0.5)  \tan\left(\frac{\theta_{d}}{2^{\nu+1}}\right) \right]^T, \\
\left[y/2+0.5, 0\right]^T.
\end{align*}
It is then further straightforward to verify that for any $[\xi_\nu, \eta_\nu]^T$ in this triangle, 
$\sqrt{\xi_\nu^2 + \eta_\nu^2}$ can be bounded by
\[
(y/2+0.5)\cos\left(\frac{\theta_d}{2^{\nu+1}}\right) \leq \sqrt{\xi_\nu^2 + \eta_\nu^2} 
\leq \frac{y/2+0.5}{\cos\left(\theta_{d}/2^{\nu+1}\right)}.  
\]
By the invariance of 2-norm $\xi_\nu^2 + \eta_\nu^2 = x^2 + (y/2-0.5)^2$,
\[
(y/2+0.5)^2\cos^2\left(\frac{\theta_d}{2^{\nu+1}}\right) \leq x^2 + (y/2-0.5)^2 \leq \frac{(y/2+0.5)^2}{\cos^2\left(\theta_{d}/2^{\nu+1}\right)}.
\]
Rearranging terms, we have
\[
y - (y/2+0.5)^2 \sin^2 (\theta_{d}/2^{\nu+1})  \leq x^2  \leq y + (y/2+0.5)^2 \tan^2(\theta_{d}/2^{\nu+1}).
\]
Therefore
\begin{align*}
|y-x^2| & \leq (y/2+0.5)^2 \tan^2(\theta_{d}/2^{\nu+1}) \leq \left[\max(\ell^2,u^2)/2 + 0.5\right]^2 \left(\frac{4}{\pi}\frac{\theta_d}{2^{\nu+1}}\right)^2\\
&\leq \frac{\left[\max(\ell^2,u^2)+1\right]^2 }{2^{2\nu-2}},
\end{align*}
where the second inequality is because $\nu \geq 2$ and $\tan(t) \leq \frac{4}{\pi}t$ for any $t \in [0,\pi/4]$.
Further, note that for any nonnegative $a,b$, $\sqrt{a+b}\leq \sqrt{a}+\sqrt{b}$,
\[
|x| \leq \sqrt{y} + (y/2+0.5) \tan(\theta_{d}/2^{\nu+1}), 
\]
\[
\sqrt{y} \leq |x| + (y/2+0.5) \sin(\theta_{d}/2^{\nu+1}), 
\]
So 
\[
\left|\sqrt{y} - |x|\right| \leq (y/2+0.5) \tan(\theta_{d}/2^{\nu+1}) \leq \left[\max(\ell^2,u^2)/2 + 0.5\right]\cdot \left(\frac{4}{\pi}\frac{\theta_d}{2^{\nu+1}}\right)
\leq \frac{\max(\ell^2,u^2)/2 + 0.5}{2^{\nu-2}}.
\]
The fact that $y\geq x^2$ for all $(x,y) \in D_\nu^+(\ell,u)$ is obvious. 
\end{proof}
By replacing each $\Scal_{[\ell_j, u_j]}$ ($\forall j$) with $D_\nu(\ell_j,u_j)$ we obtain relaxations
for (\ref{MIQCP}), whose optimal value is a valid lower bound to that of (\ref{MIQCP}).
With simple algebra, this relaxation can be written as:
\begin{equation}\label{eq:relax_D}
\begin{aligned}
\tau^{\nu} :=\min_{x, y,\zeta} \quad & f^{(0)}(x,y, \zeta) - \sum_{j=1}^n \delta_j^{(0)}\left(y_j - x_j^2\right) \\
s.t., \quad &  f^{(i)}(x, \zeta) \leq \sum_{j=1}^n \delta_j^{(i)}\left(y_j - x_j^2\right) , \quad \forall i=1,...,m \\
& (x_j, y_j) \in D_\nu(\ell_j, u_j).
\end{aligned}
\end{equation}
where $f^{(i)}, i=0,...,m$ are the quadratic functions in (\ref{MIQCP}). 
By Theorem \ref{thm:approx}, for each $i=1,...,m$, the expression 
$\sum_{j=1}^n \delta_j^{(i)}\left(y_j - x_j^2\right)$ can be bounded by 
\begin{equation}\label{eq:consviol}
\left\| \delta^{(i)} \right\|_1 \frac{\left[\max_{j}{\max(\ell_j^2, u_j^2)}+1\right]^2}{2^{2\nu-2}},\qquad i=1,...,m.
\end{equation}
Therefore if we let $(x(\nu), y(\nu), \zeta(\nu))$ to denote a global optimal solution to (\ref{eq:relax_D}),
then $(x(\nu), \zeta(\nu))$ is an almost-feasible solution to (\ref{MIQCP}) with violation of the $i$-th constraint 
at most (\ref{eq:consviol}). Take $\nu \mapsto 0$ and assume $(x^*, \zeta^*)$ is a limit point of $(x(\nu), \zeta(\nu))$. 
Then by Theorem \ref{thm:approx} and the continuity of $f^{i} \ (i=0,...,m)$, it is straightforward to see 
that $(x^*, \zeta^*)$ is a global optimal solution to (\ref{MIQCP}). Analogous results can be established
for the relaxation with $D^+(\ell_j, u_j)$ ($j=1,...,n$).

In the next section we will show $D_\nu(\bullet, \bullet)$ and $D^+_\nu(\bullet, \bullet)$ admit MILP and 
convex MIQCP formulations, respectively. Therefore related relaxations are computable by global solvers for 
convex MIQCP.

\section{Mixed-Integer Formulations for the Approximation Sets}\label{sec:mip}
As the definitions of $D_\nu(\ell, u)$ and $D^+_\nu(\ell, u)$ use nonsmooth folding operations, 
they are not immediately admissible to most optimization solvers. The problematic constraints are the 
equalities in (\ref{def:xieta1}) and  (\ref{def:xieta2}) 
involving the absolute value functions:
\begin{equation}\label{abs_cons}
\begin{aligned}
\eta_1 &= \left| -\sin(\theta_{mid}) x +  \cos(\theta_{mid})(y-1)/2 \right|, \\
\eta_{j+1} &= \left| -\sin(\theta_{d}/(2^{j+1}) \xi_j +  \cos(\theta_{d}/(2^{j+1})) \eta_j \right|, \quad j=1,...,\nu-1.
\end{aligned}
\end{equation}
Note that the graph of a absolute value function in a bounded interval is simply the union of two line segments.
By disjunctive programming we can model such constraints with additional binary variables \cite{Balas1998}. 
Consider the following set with finite $L$ and $R$,
\[
\{(\eta, \omega) \in \Rbb^2 \ | \ \eta = |\omega|, \ L \leq \omega \leq R \}.
\]
If $L \geq 0$ or $R \leq 0$, this set reduces to a simple convex set. So we assume $L<0$ and $R>0$. 
An integer formulation is:
\begin{equation}\label{MIP:absval}
\Jcal(L,R) := \left\{(\eta, \omega) \in [\ell, u]\times \Rbb \ \middle| \ 
\begin{bmatrix}\eta \\ \omega\end{bmatrix} = \begin{bmatrix} \lambda_1 |L| + \lambda_2 R \\  \lambda_1 L + \lambda_2 R
 \end{bmatrix}, \ \ 
\begin{aligned} 0&\leq \lambda_1 \leq 1- z, \\
0 &\leq \lambda_2 \leq z, 
\end{aligned} \ \ 
z \in \{0,1\}
\right\}.
\end{equation}
It is easy to verify that if $(\eta, \omega) \in \Jcal(L,R)$ and $z = 1$, then $\omega \in [0,R]$ and $\eta = \omega$, otherwise if $z = 0$ then $\omega \in [L,0]$ 
and $\eta = -\omega$. Therefore to derive integer formulations for $D_{\nu}(\ell,u)$ and $D^+_{\nu}(\ell,u)$ it suffices to 
derive finite bounds for the quantities inside the absolute value functions in (\ref{abs_cons}).
The following proposition and the subsequent remark provide the desired bounds.



\begin{proposition}
Let $x \in [\ell, u]$. 
Let $\{\xi_j(x), \eta_j(x)\}_{j=1}^\nu$ and $\{\omega_j(x)\}_{j=0}^{\nu-1}$  be parametric quantities defined as follows 
\begin{align*}
\begin{bmatrix} \xi_1(x) \\ \eta_1(x) \end{bmatrix} &:= \Fold\circ\Rot\left(\begin{bmatrix} x \\ x^2/2-0.5 \end{bmatrix}; \theta_{mid}\right), & \\ 
\omega_0(x) &:= -\sin(\theta_{mid}) \cdot x + \cos(\theta_{mid}) \cdot  (x^2/2-0.5) \\
\begin{bmatrix} \xi_{j+1}(x) \\ \eta_{j+1}(x) \end{bmatrix} &:= \Fold \circ \Rot\left(\begin{bmatrix} \xi_j(x) \\ \eta_j(x) \end{bmatrix}; \theta_d/(2^{j+1}) \right), 
& j=1,...,\nu - 1 \\
\omega_j(x) &:= -\sin(\theta_d/2^{j+1}) \cdot \xi_j(x) + \cos(\theta/2^{j+1}) \cdot \eta_j(x), &j=1,...,\nu-1.
\end{align*}
then 
\begin{align}
-(x^2/2+0.5) &\leq \omega_0(x) \leq x^2/2+0.5, \label{bound_omega:1} \\
-(x^2/2+0.5)\sin(\theta_d/2^{j+1}) &\leq \omega_j(x) \leq (x^2/2+0.5)\sin(\theta_d/2^{j+1}), \qquad \forall j \geq 1. \label{bound_omega:2}
\end{align}
\end{proposition}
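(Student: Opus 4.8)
The plan is to track two invariants of the parametric vectors $[\xi_j(x),\eta_j(x)]^T$ as $j$ grows. First, since both $\Fold$ and $\Rot$ preserve the Euclidean norm, and identity (\ref{ineq_concave}) applied with $y=x^2$ gives $x^2+(x^2/2-0.5)^2=(x^2/2+0.5)^2$, we have $\xi_j(x)^2+\eta_j(x)^2=(x^2/2+0.5)^2$ for every $j$; in particular the vector is never zero because $x^2/2+0.5\geq 1/2$. Second, its angle is squeezed into an ever narrower sector: I claim $\ArcTan([\xi_j(x),\eta_j(x)]^T)\subseteq[0,\theta_d/2^{j}]+\Zbb\cdot 2\pi$ for $j=1,\dots,\nu$, which I would prove by induction on $j$ using Lemma \ref{lem:rotflip}. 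For the base case, $[x,x^2/2-0.5]^T$ has angle in $[\theta_{min},\theta_{max}]+\Zbb\cdot 2\pi$ by the definition (\ref{def:theta_minmax}) of $\theta_{min},\theta_{max}$, so Lemma \ref{lem:rotflip} with $\theta_1=\theta_{min}$, $\theta_2=\theta_{max}$ and rotation angle $\theta_{mid}$ places $[\xi_1,\eta_1]^T$ in $[0,\theta_d/2]+\Zbb\cdot 2\pi$. For the inductive step, applying Lemma \ref{lem:rotflip} with $\theta_1=0$, $\theta_2=\theta_d/2^{j}$ (whose midpoint is exactly the rotation angle $\theta_d/2^{j+1}$ used in the definition of $[\xi_{j+1},\eta_{j+1}]^T$) carries the sector for $[\xi_j,\eta_j]^T$ to the one for $[\xi_{j+1},\eta_{j+1}]^T$.

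With these invariants the bounds follow quickly. The quantity $\omega_0(x)$ is the second coordinate of $\Rot([x,x^2/2-0.5]^T;\theta_{mid})$, so $|\omega_0(x)|$ is at most the norm of that vector, which equals $x^2/2+0.5$ by the identity above; this is (\ref{bound_omega:1}). For $j\geq 1$, write $[\xi_j(x),\eta_j(x)]^T=(x^2/2+0.5)\,[\cos\phi_j,\sin\phi_j]^T$ with $\phi_j\in[0,\theta_d/2^{j}]$, using the two invariants. Since $\omega_j(x)$ is the second coordinate of $\Rot([\xi_j(x),\eta_j(x)]^T;\theta_d/2^{j+1})$, the rotation formula gives $\omega_j(x)=(x^2/2+0.5)\sin(\phi_j-\theta_d/2^{j+1})$. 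Because $\phi_j-\theta_d/2^{j+1}\in[-\theta_d/2^{j+1},\theta_d/2^{j+1}]$ and $\theta_d/2^{j+1}\leq\theta_d/4<\pi/2$ for $j\geq 1$, we get $|\sin(\phi_j-\theta_d/2^{j+1})|\leq\sin(\theta_d/2^{j+1})$, which is (\ref{bound_omega:2}).

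The work here is mostly careful bookkeeping rather than a real obstacle. I would make sure Lemma \ref{lem:rotflip} is applied only to admissible angle pairs, i.e.\ that $\theta_1\leq\theta_2$ and $\theta_2-\theta_1\leq 2\pi$ at each stage; this is where the facts $\theta_{min},\theta_{max}\in(-3\pi/2,\pi/2)$ and hence $\theta_d<2\pi$ are used, and the same range is what makes $\theta_d/2^{j+1}<\pi/2$ for $j\geq 1$ in the last step. I would also handle the $\Zbb\cdot 2\pi$ ambiguity of $\ArcTan$ cleanly by fixing, at each level $j$, the single representative $\phi_j\in[0,\theta_d/2^{j}]$ of the nonzero vector $[\xi_j,\eta_j]^T$ and arguing only with it. The one ingredient I would want to make explicit is the base-case angle statement, that the angle of $[x,x^2/2-0.5]^T$ lies in $[\theta_{min},\theta_{max}]$ for every $x\in[\ell,u]$; this follows from (\ref{def:theta_minmax}) once one checks that the relevant branch of $x\mapsto\arctan((x^2-1)/(2x))$ is increasing on $[\ell,u]$, which is a one-line derivative computation.
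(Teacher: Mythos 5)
Your proposal is correct and follows essentially the same route as the paper: both arguments fix the radius at $x^2/2+0.5$ via the $2$-norm invariance of $\Fold$ and $\Rot$, and then use the angular-sector containment (Lemma \ref{lem:rotflip}) to bound $\omega_j(x)$, the second coordinate of the rotated vector, by $(x^2/2+0.5)\sin(\theta_d/2^{j+1})$ using $\theta_d/2^{j+1}\leq\pi/2$ for $j\geq 1$. The only difference is presentational: you spell out as an induction the sector claim that the paper dismisses with ``it is easy to see,'' which is a reasonable tightening rather than a new idea.
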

\begin{proof}
By the invariance of 2-norm under the rotation and flipping operations, for $j=0,...,\nu$, the radial function
\[
\Rcal(x) := \sqrt{\xi_j^2(x) + \eta_j^2(x)} = \sqrt{\xi_0^2(x) + \eta_0^2(x)} = \sqrt{x^2 + (x^2/2-0.5)^2} = x^2/2+0.5.
\]
Therefore 
\[
\left| \omega_0(x) \right| \leq  \left\| \Rot([x, x^2/2-0.5]^T; \theta_{mid}) \right\| = \sqrt{x^2 + (x^2/2-0.5)^2} = x^2/2 + 0.5,
\]
and we have (\ref{bound_omega:1}).

It is easy to see that 
$\ArcTan\left(\Rot\left(\begin{bmatrix} x \\ x^2/2-0.5 \end{bmatrix}; \theta_{mid}\right)\right) \subseteq [ -\theta_d/2, \theta_d/2 ]+ \Zbb\cdot 2\pi$,
and for all $j\geq 1$, $\ArcTan\left(\Rot\left(\begin{bmatrix} \xi_j(x) \\ \eta_j(x) \end{bmatrix}; \theta_{d}/2^{j+1}\right)\right) \subseteq [ -\theta_d/2^{j+1}, \theta_d/2^{j+1} ]+ \Zbb\cdot 2\pi$. Since $\theta_d/2^{j+1} \in [0,\frac{\pi}{2}]$ for any $j\geq 1$ we have
\[
-\Rcal(x) \cdot \sin(\theta_d/2^{j+1}) \leq \omega_j(x) \leq \Rcal(x) \cdot \sin(\theta_d/2^{j+1}), \qquad \forall j \geq 1.
\]
This proves (\ref{bound_omega:2}).
\end{proof}

\begin{remark}
For $x \in [\ell, u]$,  $x^2/2+0.5$ can be upper bounded by $\max(\ell^2, u^2)/2 + 0.5$. Let us define constants $C$ and $C_j \ (j=1,...,\nu)$ by
\[
C = \max(\ell^2, u^2)/2 + 0.5, \qquad \mbox{ and }\ \  C_j := C \cdot \sin(\theta_d/2^{j+1}), \ \ \forall j=1,...,\nu.
\]
Then for any $(x,y) \in \Scal_{[\ell, u]}$, we have 
\begin{align*}
|-\sin(\theta_{mid}) x +  \cos(\theta_{mid})(y-1)/2| &\leq C \\ 
\left| -\sin(\theta_{d}/(2^{j+1}) \xi_j +  \cos(\theta_{d}/(2^{j+1})) \eta_j \right| &\leq C_j, \ \ \forall j=1,...,\nu-1.
\end{align*}
By using the idea of (\ref{MIP:absval}), we then construct integer formulation of constraints (\ref{def:xieta1}) and  (\ref{def:xieta2}) with additional variables 
$\{(\lambda_{j,1}, \lambda_{j,2}, z_j)\}_{j=1}^\nu$ and  some linear constraints:
\begin{align}
&\left\{
\begin{aligned}
\xi_1 & = x \cdot \cos(\theta_{mid}) + (y/2-0.5) \cdot \sin(\theta_{mid}), \\
(\lambda_{1,2}-\lambda_{1,1})\cdot C &= -x \cdot \sin(\theta_{mid}) + (y/2-0.5) \cdot \cos(\theta_{mid}), \\
\eta_1 & = (\lambda_{1,1}+\lambda_{1,2}) \cdot C, \\
0 &\leq \lambda_{1,1} \leq (1-z_1), \\
0 &\leq \lambda_{1,2} \leq z_1, \quad z_1 \in \{0,1\}.
\end{aligned}
\right.    \label{MIP:xieta1} \\
&\left\{
\begin{aligned}
\xi_{j+1} & = \xi_j \cdot \cos(\theta_d/2^{j+1}) + \eta_j \cdot \sin(\theta_d/2^{j+1}), \\
(\lambda_{j+1,2}-\lambda_{j+1,1})\cdot C_j &= - \xi_j \cdot \sin(\theta_d/2^{j+1}) + \eta_j \cdot \cos(\theta_d/2^{j+1}), \\
\eta_{j+1} & = (\lambda_{j+1,1}+\lambda_{j+1,2}) \cdot C_j, \\
0 &\leq \lambda_{j,1} \leq (1-z_{j+1}), \\
0 &\leq \lambda_{j,2} \leq z_{j+1}, \quad z_{j+1} \in \{0,1\}.
\end{aligned}
\right. \quad \forall j=1,...,\nu-1 \label{MIP:xieta2}
\end{align}
Therefore $D_\nu(\ell,u)$ and $D^+_\nu(\ell,u)$ have the following integer representations:
\begin{align}
D_\nu(\ell, u) & = \left\{ (x, y) \in B_{[\ell, u]} \ \middle| \ 
\begin{aligned}
\exists \{\xi_j, \eta_j, \lambda_{j,1}, \lambda_{j,2}, z_j\}_{j=1}^\nu,  \\
(\ref{MIP:xieta1}), \ (\ref{MIP:xieta2}), \ (\ref{ineq:secant}), \ (\ref{ineq:tangent1}), \ (\ref{ineq:tangent2})
\end{aligned}
\right\}, \label{MIP:D} \\
D^+_\nu(\ell, u) & = \left\{ (x, y) \in B_{[\ell, u]} \ \middle| \ 
\begin{aligned}
\exists \{\xi_j, \eta_j, \lambda_{j,1}, \lambda_{j,2}, z_j\}_{j=1}^\nu,  \\
(\ref{MIP:xieta1}), \ (\ref{MIP:xieta2}), \ (\ref{ineq:secant}), \ y \geq x^2
\end{aligned}
\right\}. \label{MIP:D+}
\end{align}
Note that these two integer formulations use at most $4\nu$ number of continuous variables and $\nu$ binary variables.
\end{remark}

\section{Disjunctive Characterization of the Approximation Sets}\label{sec:disjunctive}
In this section we seek to understand the geometry of approximation sets $D_\nu(\ell,u)$ and 
$D^+_\nu(\ell,u)$ in the original space, projecting out the additional lifted variables used in their constructions.
We show that both of the sets $D_\nu(\ell,u)$ and $D^+_\nu(\ell,u)$ are in fact the union of $2^\nu$ number of convex sets naturally 
approximating $\Scal_{[\ell,u]}$ in the original $(x,y)$ space. We first illustrate the this disjunctive characterization by Figure \ref{fig:disjunctive},
where the solid curves
depict set $\{(x, \frac{x^2-1}{2}) \ | \ x \in [\ell,u]\}$. This curve can be understood by first  embedding $\Scal_{[\ell,u]}$ into the boundary 
of the second-order-cone as in Figure \ref{fig:embedding} then projecting onto its first two dimensions. For fixed $\nu$, we  
partition the angular region $[\theta_{min}, \theta_{max}]$ into $2^\nu$ equi-angular sectors. Let $x_{1},...,x_{2^\nu+1}$ to denote the horizontal coordinates
of the intersection points between the solid curve and sector boundaries. Now consider set $\Scal_{[\ell,u]}$. Take all the supporting
tangent lines of $\Scal_{[\ell,u]}$ at each ``knot" $(x_j,x_j^2)$ ($\forall j$), and all secant lines passing two adjacent knots, we hence form a relaxation 
set of $\Scal_{[\ell,u]}$ which is the union of $2^\nu$ (convex) triangular regions. Results in this section show that this relaxation set coincides with $D_\nu(\ell,u)$.
Figure \ref{fig:disjunctive} illustrate the case of $\nu=1,2,3$, where the shaded region is $D_\nu(\ell,u)$ shifted/rescaled to match the curve $(x^2-1)/2$.
The disjunctive characterization of $D^+_\nu(\ell, u)$ is very much the same, except the lower piecewise linear boundary of $D_\nu(\ell,u)$ is replaced by
the smooth curve $y=x^2$ for $x \in [\ell,u]$.

\begin{figure}[htbp]
\begin{center}
\includegraphics[scale=0.6]{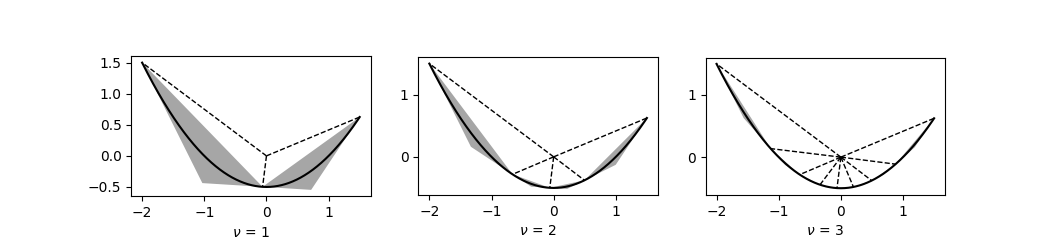}
\caption{Disjunctive interpretation of $D_\nu(\ell,u)$ with $\nu=1,2,3$, shaded region is $\{(x, (y-1)/2 \ | \ (x,y) \in D_\nu(\ell,u) \}$}
\label{fig:disjunctive}
\end{center}
\end{figure}

To algebraically prove this disjunctive characterization, we first establish an elementary lemma providing a characterization of one of such convex triangles.
\begin{lemma}\label{lem:Delta}
Let $\alpha$ and $\beta$ be two angles (in radians) with values in $\left(-\frac{3}{2}\pi, \frac{\pi}{2}\right)$. Let $x_\alpha$ be the unique value 
such that $\left(x_\alpha, \frac{x_\alpha^2-1}{2}\right) = (r \cos \alpha,  r\sin\alpha)$ for some $r>0$,  and $x_\beta$ be the unique value 
such that $\left(x_\beta, \frac{x_\beta^2-1}{2}\right) = (r' \cos \beta,  r'\sin\beta)$ for some $r' > 0$. 
 Then the triangular convex region, denoted as $\Delta_{\alpha,\beta}$, formed by the two tangent lines of 
 $\Ucal:= \{(x,x^2) \ | \ x \in \Rbb\}$ at $(x_\alpha, x_\alpha^2)$ and $(x_\beta, x_\beta^2)$, and the secant line connecting these two points, 
 is characterized by the following three inequalities:
\begin{equation} 
\Delta_{\alpha,\beta} = \left\{ (x,y) \ \middle| \
\begin{aligned}
x \cdot \cos\alpha + y \cdot \left(\frac{\sin\alpha-1}{2}\right) &\leq \frac{\sin\alpha+1}{2} \\
x \cdot \cos\beta + y \cdot \left(\frac{\sin\beta-1}{2}\right) &\leq \frac{\sin\beta+1}{2} \\
x  \cdot \cos\frac{\alpha+\beta}{2} + y \left(\frac{\sin\frac{\alpha+\beta}{2}-\cos\frac{\alpha-\beta}{2}}{2}\right) &\geq 
\frac{\sin\frac{\alpha+\beta}{2}+\cos\frac{\alpha-\beta}{2}}{2}
 \end{aligned} \right\}. \label{eq:Delta}
\end{equation} 
Furthermore, the convex region bounded by $\Ucal$ and the secant line passing through $(x_\alpha, x_\alpha^2)$ and $(x_\beta, x_\beta^2)$,
denoted by $\Delta^+_{\alpha,\beta}$, has the characterization
\begin{equation} 
\Delta^+_{\alpha,\beta} = \left\{ (x,y) \ \middle| \
\begin{aligned}
x  \cdot \cos\frac{\alpha+\beta}{2} + y \left(\frac{\sin\frac{\alpha+\beta}{2}-\cos\frac{\alpha-\beta}{2}}{2}\right) &\geq 
\frac{\sin\frac{\alpha+\beta}{2}+\cos\frac{\alpha-\beta}{2}}{2} \\
y &\geq x^2 
 \end{aligned} \right\}. \label{eq:Delta+}
\end{equation} 
\end{lemma}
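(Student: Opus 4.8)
The plan is to reduce the claim to two elementary facts about the parabola $\Ucal=\{(x,x^2)\mid x\in\Rbb\}$. First, its tangent line at $(a,a^2)$ is $\{2ax-y=a^2\}$, and the epigraph $\{y\ge x^2\}$ lies in the half‑plane $\{2ax-y\le a^2\}$, since $2ax-y\le 2ax-x^2=a^2-(x-a)^2$. Second, for $a\neq b$ the chord of $\Ucal$ through $(a,a^2)$ and $(b,b^2)$ is $\{(a+b)x-y=ab\}$, and the ``lens'' $\Delta^+$ between $\Ucal$ and this chord is $\{y\ge x^2\}\cap\{(a+b)x-y\ge ab\}$, again by convexity. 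Granting these, $\Delta_{\alpha,\beta}$ — the bounded cell cut out by the two tangents and the chord, equivalently the intersection of the three half‑planes each containing the opposite vertex — is exactly
\[
\{2x_\alpha x-y\le x_\alpha^2\}\cap\{2x_\beta x-y\le x_\beta^2\}\cap\{(x_\alpha+x_\beta)x-y\ge x_\alpha x_\beta\},
\]
the side bookkeeping following from $(u-v)^2\ge 0$ (e.g. $x_\beta^2-(2x_\alpha x_\beta-x_\alpha^2)=(x_\beta-x_\alpha)^2\ge 0$ puts the opposite knot on the epigraph side of the $\alpha$‑tangent), and $\Delta^+_{\alpha,\beta}=\{y\ge x^2\}\cap\{(x_\alpha+x_\beta)x-y\ge x_\alpha x_\beta\}$. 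It then remains only to re‑express these half‑planes in terms of the angles $\alpha,\beta$.

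For this I would first pin down the correspondence $\alpha\leftrightarrow x_\alpha$. By the identity (\ref{ineq_concave}), $x_\alpha^2+\bigl(\tfrac{x_\alpha^2-1}{2}\bigr)^2=\bigl(\tfrac{x_\alpha^2+1}{2}\bigr)^2$, so the scalar $r>0$ in the definition of $x_\alpha$ equals $\tfrac{x_\alpha^2+1}{2}$, whence
\[
\cos\alpha=\frac{2x_\alpha}{x_\alpha^2+1},\quad \sin\alpha=\frac{x_\alpha^2-1}{x_\alpha^2+1},\quad \frac{\sin\alpha+1}{2}=\frac{x_\alpha^2}{x_\alpha^2+1},\quad \frac{\sin\alpha-1}{2}=\frac{-1}{x_\alpha^2+1};
\]
equivalently $x_\alpha=\tan\!\bigl(\tfrac\pi4+\tfrac\alpha2\bigr)$, and since $\tfrac\pi4+\tfrac\alpha2\in(-\tfrac\pi2,\tfrac\pi2)$ the map $\alpha\mapsto\tan(\tfrac\pi4+\tfrac\alpha2)$ is a strictly increasing bijection $(-\tfrac32\pi,\tfrac\pi2)\to\Rbb$ (this also supplies the uniqueness of $x_\alpha$ asserted in the statement). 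Multiplying the first inequality of (\ref{eq:Delta}) through by $x_\alpha^2+1>0$ turns it into $2x_\alpha x-y\le x_\alpha^2$, i.e. the tangent half‑plane at $(x_\alpha,x_\alpha^2)$; likewise for the second.

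The chord half‑plane requires one more computation. Writing $a=\tfrac\pi4+\tfrac\alpha2$, $b=\tfrac\pi4+\tfrac\beta2$, so that $x_\alpha=\tan a$, $x_\beta=\tan b$, $a+b=\tfrac\pi2+\tfrac{\alpha+\beta}{2}$ and $a-b=\tfrac{\alpha-\beta}{2}$, the product‑to‑sum formulas give
\[
x_\alpha+x_\beta=\frac{\sin(a+b)}{\cos a\cos b}=\frac{2\cos\frac{\alpha+\beta}{2}}{\cos\frac{\alpha-\beta}{2}-\sin\frac{\alpha+\beta}{2}},\qquad x_\alpha x_\beta=\frac{\sin a\sin b}{\cos a\cos b}=\frac{\cos\frac{\alpha-\beta}{2}+\sin\frac{\alpha+\beta}{2}}{\cos\frac{\alpha-\beta}{2}-\sin\frac{\alpha+\beta}{2}},
\]
with common denominator $2\cos a\cos b=\cos\frac{\alpha-\beta}{2}-\sin\frac{\alpha+\beta}{2}$, which is \emph{positive} because $a,b\in(-\tfrac\pi2,\tfrac\pi2)$. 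Substituting into $(x_\alpha+x_\beta)x-y\ge x_\alpha x_\beta$ and clearing this positive denominator (so the inequality direction is preserved) reproduces, after dividing by $2$, exactly the third inequality of (\ref{eq:Delta}) / the first inequality of (\ref{eq:Delta+}). Combining the three identifications gives that $\Delta_{\alpha,\beta}$ equals the set in (\ref{eq:Delta}) and $\Delta^+_{\alpha,\beta}$ equals the set in (\ref{eq:Delta+}).

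The only genuinely computational step is this last one: the half‑angle substitution $x_\alpha=\tan(\tfrac\pi4+\tfrac\alpha2)$ and the resulting trigonometric expressions for $x_\alpha+x_\beta$ and $x_\alpha x_\beta$, together with the crucial observation that their common denominator $\cos\frac{\alpha-\beta}{2}-\sin\frac{\alpha+\beta}{2}$ is positive on the range $\alpha,\beta\in(-\tfrac32\pi,\tfrac\pi2)$, so that clearing it does not reverse the chord inequality. The tangent‑line identities and all the half‑plane orientation bookkeeping are immediate from the convexity of $x\mapsto x^2$ and the sign of a square.
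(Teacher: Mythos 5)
Your proposal is correct, and at the top level it follows the same strategy as the paper: write the two tangent half-planes and the secant half-plane of $\Ucal$ in terms of $x_\alpha,x_\beta$, then translate them into the angular form by clearing a positive factor. The execution differs in a way worth noting. The paper first proves $x_\alpha=\tan\alpha+\sec\alpha$ by solving $\tan\alpha=\frac{x_\alpha^2-1}{2x_\alpha}$ and ruling out the spurious root with a sign case analysis, then rescales the tangent inequality by $\frac{1-\sin\alpha}{2}$ and the secant inequality by $\frac{1}{2}\left(\cos\frac{\alpha-\beta}{2}-\sin\frac{\alpha+\beta}{2}\right)$, verifying two displayed trigonometric identities, namely (\ref{nasty-eq1})--(\ref{nasty-eq2}), by product-to-sum and half-angle manipulations. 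You instead read off $r=\frac{x_\alpha^2+1}{2}$ from (\ref{ineq_concave}), obtain the rational expressions $\cos\alpha=\frac{2x_\alpha}{x_\alpha^2+1}$, $\sin\alpha=\frac{x_\alpha^2-1}{x_\alpha^2+1}$, i.e.\ $x_\alpha=\tan\left(\frac{\pi}{4}+\frac{\alpha}{2}\right)$, which simultaneously gives existence and uniqueness of $x_\alpha$ as a monotone bijection (no case analysis), lets you pass from the first inequality of (\ref{eq:Delta}) to the tangent half-plane by multiplying with $x_\alpha^2+1>0$, and reduces the secant step to the closed forms for $x_\alpha+x_\beta$ and $x_\alpha x_\beta$ over the common denominator $\cos\frac{\alpha-\beta}{2}-\sin\frac{\alpha+\beta}{2}$, whose positivity you justify from $\frac{\pi}{4}+\frac{\alpha}{2},\frac{\pi}{4}+\frac{\beta}{2}\in\left(-\frac{\pi}{2},\frac{\pi}{2}\right)$. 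This is a cleaner route to the same identities, and you are also more careful on two points the paper glosses over: the clearing factors must be positive (not merely nonzero) to preserve the inequality directions, and the orientation of the three half-planes defining the triangle needs the elementary checks you supply via $(x_\alpha-x_\beta)^2\ge 0$. The only caveat, shared with the paper, is the implicit assumption $\alpha\neq\beta$ so that the triangle is nondegenerate.
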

\begin{proof}
Note that $x_\alpha$ is the $x$-coordinate of the unique intersection point of ray $\{(r \cos \alpha,  r\sin\alpha) \ | \ r > 0\}$
and $\left\{\left(x, \frac{x^2-1}{2}\right)\ \middle| \ x \in \Rbb \right\}$. 
We first claim that $x_\alpha =  \tan\alpha + \sec\alpha$. Note that if $\alpha=-\frac{\pi}{2}$ then $x_\alpha = 0$. Otherwise
\[
\tan\alpha = \frac{x_\alpha^2 - 1}{2x_\alpha} \ \ \Rightarrow \ \ x_\alpha = \tan\alpha + \sec\alpha \mbox{ or } \tan\alpha - \sec\alpha.
\]
If $\alpha \in (-3\pi/2, -\pi/2)$, by geometry we must have $x_\alpha < 0$. This rules out the possibility $\tan\alpha - \sec\alpha$
which is positive in this case. If $\alpha \in ( -\pi/2, \pi/2)$, $x_\alpha > 0$. Again we must have $x_\alpha = \tan\alpha + \sec\alpha$. 

It is then straightforward to compute the (supporting) tangent line of $\Ucal$ at $(x_\alpha, x_\alpha^2)$ is
\[
y \geq 2 (\tan\alpha + \sec\alpha) x - (\tan\alpha + \sec\alpha)^2.
\]
This is equivalent to the first inequality by multiplying with factor $\frac{1-\sin\alpha}{2}$ 
(which is nonzero for all $\alpha \in (-3\pi/2, \pi/2)$). 

Similarly, the second inequality in (\ref{eq:Delta}) characterizes the supporting tangent line of $\Ucal$ at $(x_\beta, x_\beta^2)$.

Finally, writing $x_\alpha = \tan\alpha +\sec\alpha = \frac{\sin\alpha+1}{\cos\alpha}$ and $x_\beta = \frac{\sin\beta+1}{\cos\beta}$, the secant inequality connecting $(x_\alpha, x_\alpha^2)$ and $(x_\beta, x_\beta^2)$ is
\[
y \leq \left(\frac{\sin\alpha+1}{\cos\alpha} + \frac{\sin\beta+1}{\cos\beta}\right) x - \frac{(\sin\alpha+1)(\sin\beta+1)}{\cos\alpha \cos\beta}.
\]
Multiplying $\frac{1}{2}\left(\cos \frac{\alpha-\beta}{2} - \sin\frac{\alpha+\beta}{2} \right)$ to both sides, to prove its equivalence to 
the third inequality in (\ref{eq:Delta}) it then suffices to verify that
\begin{align}
\left(\frac{\sin\alpha+1}{\cos\alpha} + \frac{\sin\beta+1}{\cos\beta}\right) \cdot \frac{1}{2}\left(\cos \frac{\alpha-\beta}{2} - \sin\frac{\alpha+\beta}{2}\right)
= \cos\frac{\alpha+\beta}{2}, \label{nasty-eq1} \\
\frac{(\sin\alpha+1)(\sin\beta+1)}{\cos\alpha \cos\beta} 
\frac{1}{2}\left(\cos \frac{\alpha-\beta}{2} - \sin\frac{\alpha+\beta}{2}\right) 
= \frac{\sin\frac{\alpha+\beta}{2}+\cos\frac{\alpha-\beta}{2}}{2}. \label{nasty-eq2}
\end{align}
(\ref{nasty-eq1}) can be verified by
\begin{align*}
LHS &= \frac{\sin(\alpha+\beta)+\cos\alpha + \cos\beta}{\cos\alpha  \cos\beta} \cdot \frac{1}{2}\left(\cos \frac{\alpha-\beta}{2} - \sin\frac{\alpha+\beta}{2}\right) \\
&= \frac{2 \sin\frac{\alpha+\beta}{2} \cos \frac{\alpha+\beta}{2} + 2 \cos \frac{\alpha+\beta}{2} \cos\frac{\alpha-\beta}{2}}{\cos\alpha  \cos\beta} \cdot \frac{1}{2}\left(\cos \frac{\alpha-\beta}{2} - \sin\frac{\alpha+\beta}{2}\right) \\
&= \frac{\cos\frac{\alpha+\beta}{2}\left[\cos^2\frac{\alpha-\beta}{2}-\sin^2\frac{\alpha+\beta}{2}\right] }{\cos\alpha \cos\beta} \\
&=  \frac{\cos\frac{\alpha+\beta}{2} \left[0.5\cos(\alpha-\beta)+0.5\cos(\alpha+\beta)\right] }{\cos\alpha \cos\beta} = \cos\frac{\alpha+\beta}{2}.
\end{align*}
To verify (\ref{nasty-eq2}), 
\begin{align*}
&\frac{(\sin\alpha+1)(\sin\beta+1)}{\cos\alpha \cos\beta} 
\frac{1}{2}\left(\cos \frac{\alpha-\beta}{2} - \sin\frac{\alpha+\beta}{2}\right)  \\
&= \frac{\left(2\sin(\alpha/2)\cos(\alpha/2)+1\right)\left(2\sin(\beta/2)\cos(\beta/2)+1\right)}{(\cos^2(\alpha/2)-\sin^2(\alpha/2))(\cos^2(\beta/2)-\sin^2(\beta/2))} \frac{1}{2} 
\left(\cos(\alpha/2) - \sin(\alpha/2)\right) \left(\cos(\beta/2) - \sin(\beta/2)\right) \\
&= \frac{\left(\sin(\alpha/2)+\cos(\alpha/2)\right)^2 \left(\sin(\beta/2)+\cos(\beta/2)\right)^2}{2\left(\sin(\alpha/2)+\cos(\alpha/2)\right) \left(\sin(\beta/2)+\cos(\beta/2)\right)} \\
&= \frac{1}{2}\left(\sin(\alpha/2)\cos(\beta/2)+\cos(\alpha/2)\sin(\beta/2)+\sin(\alpha/2)\sin(\beta/2)+\cos(\alpha/2)\cos(\beta/2)\right) \\
&= \frac{\sin((\alpha+\beta)/2) + \cos((\alpha-\beta)/2)}{2}.
\end{align*}
This completes our proof of (\ref{eq:Delta}). The proof of (\ref{eq:Delta+}) is analogous.
\end{proof}

We show that both $D_\nu(\ell,u)$ and $D_\nu^+(\ell,u)$ are the union of $2^\nu$ number of convex sets in the form 
of $\Delta_{\alpha,\beta}$ and $\Delta^+_{\alpha,\beta}$, respectively. In fact, we show that if we fix the binary $z$ vector 
in (\ref{MIP:D}) and  (\ref{MIP:D+}), then $D_\nu(\ell,u)$ and $D_\nu^+(\ell,u)$ reduces
to $\Delta_{\alpha,\beta}$ and $\Delta^+_{\alpha,\beta}$ with $\alpha,\beta$ being functions of $z$.

\begin{theorem}\label{thm:disjunctive}
Let $\ell \leq u$ and $\theta_{min}, \theta_d$ be angles as defined in (\ref{def:theta_minmax}) and (\ref{def:theta_mid}).
For fixed $z \in \{0,1\}^\nu$ and $(x,y) \in \Rbb^2$, 
there exists $\{\xi_j, \eta_j, \lambda_{j,1}, \lambda_{j,2}\}_{j=1}^\nu$
such that $\left(x,y, \{\xi_j, \eta_j, \lambda_{j,1}, \lambda_{j,2}, z_j\}_{j=1}^\nu \right)$ is feasible in (\ref{MIP:D}) if and only if 
\[
(x,y) \in \Delta_{\phi(z),\beta(z)},
\]
where $\Delta_{\bullet, \bullet}$ is the triangular convex set defined in Lemma \ref{lem:Delta}, and
\begin{equation}\label{def:phi_s}
\phi(z) := \theta_{min} + \sum_{i=0}^{\nu-1} \left[ (-1)^{\sum_{j=1}^i (1-z_j)}\right] \frac{\theta_d}{2^{i+1}}, \quad
\beta(z) := \phi(z) + (-1)^s \frac{\theta_d}{2^{\nu}}, \quad s := \sum_{j=1}^\nu (1-z_j).
\end{equation}
Furthermore, there exists $\{\xi_j, \eta_j, \lambda_{j,1}, \lambda_{j,2}\}_{j=1}^\nu$
such that $\left(x,y, \{\xi_j, \eta_j, \lambda_{j,1}, \lambda_{j,2}, z_j\}_{j=1}^\nu \right)$ is feasible in (\ref{MIP:D+}) if and only if 
\[
(x,y) \in \Delta^+_{\phi(z),\beta(z)}.
\]
\end{theorem}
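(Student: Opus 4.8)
The plan is to \emph{fix} the binary vector $z\in\{0,1\}^\nu$, which collapses the disjunction in (\ref{MIP:D}) and (\ref{MIP:D+}), makes the recursions (\ref{MIP:xieta1})--(\ref{MIP:xieta2}) a single affine map $(x,y)\mapsto(\xi_\nu,\eta_\nu)$, and lets us recognize the images of (\ref{ineq:secant})--(\ref{ineq:tangent2}) (resp. of (\ref{ineq:secant}) with $y\ge x^2$) as precisely the defining inequalities of $\Delta_{\phi(z),\beta(z)}$ in (\ref{eq:Delta}) (resp. of $\Delta^+_{\phi(z),\beta(z)}$ in (\ref{eq:Delta+})). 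First, with $z$ fixed, put $\epsilon_j:=2z_j-1\in\{-1,+1\}$ and $S_i:=\prod_{j=1}^i\epsilon_j=(-1)^{\sum_{j=1}^i(1-z_j)}$ (so $S_0=1$ and $S_\nu=(-1)^s$). The bounds $0\le\lambda_{j,1}\le1-z_j$ and $0\le\lambda_{j,2}\le z_j$ force one multiplier to vanish, so (\ref{MIP:xieta1})--(\ref{MIP:xieta2}) become the \emph{deterministic} recursion $[\xi_j,\eta_j]^T=\diag(1,\epsilon_j)\,\Rot\left([\xi_{j-1},\eta_{j-1}]^T;\vartheta_j\right)$ — with $[\xi_0,\eta_0]^T:=[x,(y-1)/2]^T$, $\vartheta_1:=\theta_{mid}$, $\vartheta_j:=\theta_d/2^j$ for $j\ge2$ — together with the one-sided conditions $0\le\eta_j\le C_{j-1}$ (put $C_0:=C$) coming from $0\le\lambda_{j,1},\lambda_{j,2}\le1$. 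Using the identity $\Rot(\cdot;\theta)\,\diag(1,\epsilon)=\diag(1,\epsilon)\,\Rot(\cdot;\epsilon\theta)$ for $\epsilon=\pm1$, the composition telescopes to
\[
[\xi_\nu,\eta_\nu]^T=\diag(1,S_\nu)\,\Rot\left([x,(y-1)/2]^T;\,g_\nu\right),\qquad
g_\nu=\theta_{mid}+\sum_{j=1}^{\nu-1}S_j\frac{\theta_d}{2^{j+1}}
=\theta_{min}+\sum_{i=0}^{\nu-1}S_i\frac{\theta_d}{2^{i+1}}=\phi(z),
\]
so $\xi_\nu=x\cos\phi(z)+\tfrac{y-1}{2}\sin\phi(z)$ and $\eta_\nu=S_\nu\bigl(\tfrac{y-1}{2}\cos\phi(z)-x\sin\phi(z)\bigr)$.

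Next I would substitute these two affine expressions into (\ref{ineq:tangent2}), (\ref{ineq:tangent1}), (\ref{ineq:secant}) and simplify with $\beta(z)=\phi(z)+S_\nu\theta_d/2^\nu$, i.e. $\tfrac{\phi(z)+\beta(z)}{2}=\phi(z)+S_\nu\theta_d/2^{\nu+1}$ and $\cos\tfrac{\phi(z)-\beta(z)}{2}=\cos(\theta_d/2^{\nu+1})$. A mechanical use of the angle-addition formulas (splitting on $S_\nu=\pm1$) turns (\ref{ineq:tangent2}) into the first inequality of (\ref{eq:Delta}) with $\alpha=\phi(z)$, (\ref{ineq:tangent1}) into the second with $\beta=\beta(z)$, and (\ref{ineq:secant}) into the secant inequality of (\ref{eq:Delta}); the same computation matches (\ref{ineq:secant}) with the secant inequality of (\ref{eq:Delta+}). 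This settles the ``only if'' directions at once: a feasible tuple for (\ref{MIP:D}) (resp. (\ref{MIP:D+})) with the given $z$ forces $\xi_\nu,\eta_\nu$ to equal the affine functions above, and then its remaining inequalities read exactly $(x,y)\in\Delta_{\phi(z),\beta(z)}$ (resp. $\in\Delta^+_{\phi(z),\beta(z)}$) by Lemma \ref{lem:Delta}.

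For the ``if'' direction I would take $(x,y)\in\Delta_{\phi(z),\beta(z)}$, \emph{define} $\xi_j,\eta_j$ by the deterministic recursion and $\lambda_{j,1},\lambda_{j,2}$ from $\eta_j$ and $z_j$; then (\ref{ineq:secant})--(\ref{ineq:tangent2}) hold by the previous step read backwards, and it remains to check (i) $(x,y)\in B_{[\ell,u]}$ and (ii) $0\le\eta_j\le C_{j-1}$ (the latter puts all multipliers in $[0,1]$). For (i): from the proof of Lemma \ref{lem:Delta}, $x_\alpha=\tan\alpha+\sec\alpha$ is strictly increasing on $\bigl(-\tfrac{3}{2}\pi,\tfrac{\pi}{2}\bigr)$ with $x_{\theta_{min}}=\ell$, $x_{\theta_{max}}=u$; since $\phi(z),\beta(z)\in[\theta_{min},\theta_{max}]$ the two knots have abscissas in $[\ell,u]$, and then $\Delta_{\phi(z),\beta(z)}$ lies below both tangent lines of $\Ucal$ (hence below the secant of $\Ucal$ through $(\ell,\ell^2)$, $(u,u^2)$), above the secant through the two knots (hence above $y=0$), with $x$-range inside $[\ell,u]$ — so $\Delta_{\phi(z),\beta(z)}\subseteq B_{[\ell,u]}$. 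For (ii): with $r:=\|[x,(y-1)/2]^T\|$ and $\psi_0:=\ArcTan([x,(y-1)/2]^T)$ one has $\eta_j=r\sin\bigl(S_j(\psi_0-g_j)\bigr)$; since $\Delta_{\phi(z),\beta(z)}$, shifted to the $(x,(y-1)/2)$ picture, lies in the convex angular sector between the rays at angles $\phi(z)$ and $\beta(z)$ (its knots lie on those rays, its apex between them), $\psi_0$ lies in that sector, and the dyadic nesting $[\phi(z),\beta(z)]\subseteq[g_j,\,g_j+S_j\theta_d/2^j]$ forces $S_j(\psi_0-g_j)\in[0,\theta_d/2^j]\subseteq[0,\tfrac{\pi}{2}]$, whence $0\le\eta_j\le r\sin(\theta_d/2^j)$; finally $r$ is a convex function of $(x,(y-1)/2)$, so it is maximized at a vertex of $\Delta_{\phi(z),\beta(z)}$, where it equals $(x_{\phi(z)}^2+1)/2$, $(x_{\beta(z)}^2+1)/2$, or (at the apex) $\tfrac12\sqrt{(x_{\phi(z)}^2+1)(x_{\beta(z)}^2+1)}$, each $\le C$, giving $\eta_j\le C\sin(\theta_d/2^j)\le C_{j-1}$.

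The $D^+$ case then follows with no extra work: $\Delta^+_{\phi(z),\beta(z)}\subseteq\Delta_{\phi(z),\beta(z)}$ since the parabola lies above its tangents, so (i)--(ii) are inherited, while $y\ge x^2$ appears verbatim in (\ref{MIP:D+}). The algebraic core — the telescoping of the rotations and the trigonometric match with (\ref{eq:Delta})/(\ref{eq:Delta+}) — is routine; the hard part will be the bookkeeping in step (ii): tracking the alternating signs $S_i$ to prove that $[\min(\phi(z),\beta(z)),\max(\phi(z),\beta(z))]$ is exactly the $z$-indexed dyadic subsector of $[\theta_{min},\theta_{max}]$ and that $\Delta_{\phi(z),\beta(z)}$ is contained in that subsector — equivalently, that fixing $z$ selects the correct branch of $\Fold$ at every step.
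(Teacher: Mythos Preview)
Your proposal is correct and follows essentially the same route as the paper's proof: fix $z$, recognize that the Fold--Rot recursion collapses to a single affine map $(x,y)\mapsto(\xi_\nu,\eta_\nu)$, substitute into (\ref{ineq:secant})--(\ref{ineq:tangent2}), and identify the result with the inequalities of Lemma~\ref{lem:Delta}. The paper carries out the collapse via polar coordinates, proving by induction the angle identity $\vartheta_0=\phi(z)+(-1)^s\vartheta_\nu$ (their equation (\ref{eq:recurse_angle})), whereas you do the equivalent computation in rectangular coordinates by telescoping with the commutation rule $\Rot(\cdot;\theta)\,\diag(1,\epsilon)=\diag(1,\epsilon)\,\Rot(\cdot;\epsilon\theta)$; these are the same argument in different clothing. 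Where you genuinely go further is the converse: the paper dismisses it in two lines (``all auxiliary variables are entirely determined \ldots\ our proof is then straightforward''), while you explicitly verify $(x,y)\in B_{[\ell,u]}$ and the multiplier bounds $0\le\eta_j\le C_{j-1}$ via the angular-sector and vertex-norm arguments --- this is work the paper's proof omits but that the statement actually requires, so your plan is, if anything, more complete on that side.
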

\begin{proof}
Suppose that $\left(x,y, \{\xi_j, \eta_j, \lambda_{j,1}, \lambda_{j,2}, z_j\}_{j=1}^\nu \right)$ is feasible in (\ref{MIP:D}). 
Let $(r, \vartheta_{\kappa})$ to denote the polar coordinates of $(\xi_\kappa, \eta_\kappa)$ for $\kappa=1,2,...,\nu$, i.e., 
\[
\xi_\kappa = r \cos \vartheta_\kappa , \quad \eta_\kappa = r \sin \vartheta_\kappa, \quad  r \geq 0, \quad  \vartheta_\kappa \in [0, \theta_d/2^{\kappa}].
\]
Further let $(r, \vartheta_{0})$ to denote the polar coordinates of $(x, \frac{y-1}{2})$. Note the common radius $r$ is a consequence of the invariance of 2-norm
under folding and rotation operations. We claim that
\begin{equation}\label{eq:recurse_angle}
\vartheta_0 = \theta_{min} + \sum_{i=0}^{\kappa-1} \left[(-1)^{\sum_{j=1}^i (1-z_j)}\right]\frac{\theta_d}{2^{i+1}} + \left[(-1)^{\sum_{j=1}^\kappa (1-z_j)}\right]\vartheta_{\kappa},
\quad \kappa = 1,...,\nu,
\end{equation}
where $\sum_{j=1}^0 (1-z_j) = 0$ by convention. We prove this identity by induction. Note that 
$[\xi_{1}, \eta_{1}]$ is obtained by rotating $[x, (y-1)/2]$ clockwise by $\theta_{min}+\theta_{d}/2$
then folding up. By (\ref{MIP:xieta1}) it is easy to see that the folding operation has effects 
if and only if $z_{1}=0$. Reversing this procedure we have:
\[
\vartheta_{0} = \begin{cases}
\theta_{min} + \frac{\theta_d}{2} + \vartheta_{1}, & \mbox{ if } z_{1}=1, \\
\theta_{min} + \frac{\theta_d}{2} - \vartheta_{1}, & \mbox{ if } z_{1}= 0.
\end{cases}
\]
This is equivalent to
\[
\vartheta_{0} = \theta_{min} + \frac{\theta_d}{2} + (-1)^{1-z_{1}} \vartheta_{1}.
\]
Hence (\ref{eq:recurse_angle}) holds when $\kappa = 1$. Now assuming (\ref{eq:recurse_angle}) holds for $\kappa \leq \nu-1$, we show it is valid for 
$\kappa+1$.
The same geometric arguments establish the recursive identity:
\[
\vartheta_{\kappa} = \frac{\theta_d}{2^{\kappa+1}} + (-1)^{1-z_{\kappa+1}} \vartheta_{\kappa+1}.
\]
Therefore 
\begin{align*}
\vartheta_0 &= \theta_{min} + \sum_{i=0}^{\kappa-1} \left[(-1)^{\sum_{j=1}^i (1-z_j)}\right]\frac{\theta_d}{2^{i+1}} + \left[(-1)^{\sum_{j=1}^\kappa (1-z_j)}\right]\vartheta_{\kappa} \\
&= \theta_{min} + \sum_{i=0}^{\kappa-1} \left[(-1)^{\sum_{j=1}^i (1-z_j)}\right]\frac{\theta_d}{2^{i+1}} + \left[(-1)^{\sum_{j=1}^\kappa (1-z_j)}\right]\left[\frac{\theta_d}{2^{\kappa+1}} + (-1)^{1-z_{\kappa+1}} \vartheta_{\kappa+1}\right] \\
&=\theta_{min} + \sum_{i=0}^{\kappa} \left[(-1)^{\sum_{j=1}^i (1-z_j)}\right]\frac{\theta_d}{2^{i+1}} + \left[(-1)^{\sum_{j=1}^{\kappa+1} (1-z_j)}\right]\vartheta_{\kappa+1},
\end{align*}
which proves (\ref{eq:recurse_angle}). Take $\kappa =\nu$, let $\phi(z)$ and $s$ be quantities as defined in (\ref{def:phi_s}), we have
\[
\theta_0 = \phi(z) + (-1)^s\theta_\nu \quad \Rightarrow \quad \theta_\nu = (-1)^s \left[\vartheta_0 - \phi_\nu(z) \right].
\]

Converting to the rectangular coordinates we have
\begin{align}
\xi_\nu = r\cos (\vartheta_\nu) &= \cos(\phi_\nu(z)) r\cos(\vartheta_0) +  \sin(\phi_\nu(z)) r\sin(\vartheta_0) \notag \\
&= \cos(\phi_\nu(z)) x + \sin(\phi_\nu(z))  (y-1)/2, \label{eq:xi_with_xy} \\
\eta_\nu = r\sin (\vartheta_\nu) &=  (-1)^s \left[r\cos(\phi_\nu(z)) \sin(\vartheta_0) - r \sin(\phi_\nu(z)) \cos(\vartheta_0)\right] \notag \\
&=  (-1)^s\left[\cos(\phi_\nu(z))(y-1)/2 - \sin(\phi_\nu(z)) x\right]. \label{eq:eta_with_xy}
\end{align}

Now substituting $(\xi_\nu, \eta_\nu)$ in (\ref{ineq:secant} -- \ref{ineq:tangent2}) with (\ref{eq:xi_with_xy}) and (\ref{eq:eta_with_xy}), with straightforward computation
we obtain the following three inequalities
\begin{align*}
x \cos\left(\phi(z) + (-1)^s\frac{\theta_d}{2^{\nu+1}}\right) + y\left[\frac{\sin\left(\phi(z) + (-1)^s\frac{\theta_d}{2^{\nu+1}}\right)}{2}\right]
\geq \frac{1}{2} \left[ \cos\left(\frac{\theta_d}{2^{\nu+1}}\right) + \sin\left(\phi(z) + (-1)^s\frac{\theta_d}{2^{\nu+1}}\right) \right] \\
x \cos \left(\phi_\nu(z) +(-1)^s\frac{\theta_d}{2^{\nu}} \right) + y \left[\frac{\sin\left(\phi_\nu(z) +(-1)^s\frac{\theta_d}{2^{\nu}} \right)-1}{2}\right]
\leq \frac{1+\sin\left(\phi_\nu(z) +(-1)^s\frac{\theta_d}{2^{\nu}} \right)}{2} \\
x \cos(\phi(z)) + y\left(\frac{\sin(\phi(z))-1}{2}\right) \leq \frac{\sin(\phi(z))+1}{2}.
\end{align*}
By Lemma \ref{lem:Delta} these inequalities exactly characterize $\Delta_{\phi(z), \beta(z)}$ where $\beta(z) := \phi(z) + (-1)^s \frac{\theta_d}{2^{\nu}}$.
In other words if $\left(x,y, \{\xi_j, \eta_j, \lambda_{j,1}, \lambda_{j,2}, z_j\}_{j=1}^\nu \right)$ is feasible in (\ref{MIP:D}) then 
$(x,y) \in \Delta_{\phi(z), \beta(z)}$. 

To prove the converse, note that if $(x,y) \in \Delta_{\phi(z), \beta(z)}$ for some $z \in \{0,1\}^\nu$, then all auxiliary variables 
$\{\xi_j, \eta_j, \lambda_{j,1}, \lambda_{j,2}\}_{j=1}^\nu$ are entirely determined by the recursive identity (\ref{eq:recurse_angle}). Our 
proof is then straightforward by recognizing the equivalence of  (\ref{ineq:secant} -- \ref{ineq:tangent2}) with the three inequalities
in (\ref{eq:Delta}). The proof for the statement on $D^+_\nu(\ell,u)$ and $\Delta^+_{\phi(z),\beta(z)}$ is entirely analogous.
%
%
%
\end{proof}

\begin{remark} Theorem \ref{thm:disjunctive} suggests the following disjunctive characterization of $D_\nu(\ell,u)$ and $D^+_\nu(\ell,u)$:
\begin{align*}
D_\nu(\ell,u) = \bigcup_{z \in \{0,1\}^\nu} \Delta_{\phi(z), \beta(z)},\\
D^+_\nu(\ell,u) = \bigcup_{z \in \{0,1\}^\nu} \Delta^+_{\phi(z), \beta(z)},
\end{align*}
where $\phi(z)$ and $\beta(z)$ are as defined in (\ref{def:phi_s}). This characterization may remind readers some recent work 
in constructing mixed-integer representations for the union of finitely number of polyhedral sets, e.g., \cite{VielmaNemhauser2011,VielmaSIAM2015,HuchetteVielma}.
In these works, methods were presented for constructing mixed-integer formulations with a \textit{logarithmic} (to the number of original polyhedral sets) number of 
\textit{auxiliary} binary and continuous variables.
However as their approach applies to fairly general kind of polyhedral sets, it requires at least \textit{enumerating} the structure (extreme points, etc.) of all polyhedral 
set. For example consider the notion of \textit{combinatorial disjunctive constraint} (CDC) defined in \cite{HuchetteVielma} to generalize the approach of \cite{VielmaNemhauser2011}.
Let $J = \bigcup_{i=1}^d S_i$ where $S_i:=\ext(P^i)$ is the set of extreme points of the $i$-th polytope $P^i$. To represent a (continuous) optimization
variable in the union of all $P^i$, without assuming any relations among the sets $\{P^i\}_{i=1}^d$, it is 
necessary to introduce a variable $\lambda_v (\geq 0)$ for each $v \in J$, such that $\sum_{v \in J} \lambda_v = 1$ and use the combination
\[
\sum_{v \in J}\lambda_v v.
\]
In our setting (take $D_\nu(\ell,u)$ for example), the number of polyhedral sets is $d = 2^\nu$ and the number of extreme points is already $3\cdot 2^\nu$. 
So by simply applying their approach it is unlikely to obtain an mixed-integer formulation for $D_\nu(\ell,u)$ with a \textit{totally} $O(\nu)$ number of variables 
and constraints. Our approach achieve this rate by exploiting the implicit symmetry of the square function, or in other words, the symmetry among the 
specifically constructed sets $\left\{\Delta_{\phi(z), \beta(z)}\right\}_{z\in \{0,1\}^\nu}$ or $\left\{\Delta^+_{\phi(z), \beta(z)}\right\}_{z\in \{0,1\}^\nu}$.
\end{remark}

%

\section{An adaptive refinement algorithm}\label{sec:adaptive_refine}
In this and the next sections  
we describe an iterative refinement algorithm and use it to evaluate whether our proposed approach is promising. 
Since integer variable $\zeta$ in (\ref{MIQCP}) plays no role in our approximation approach,
we focus on nonconvex quadratically constrained program
 with continuous variables:
 \begin{equation}\label{QCP} 
\begin{aligned}
\tau := \min_{x \in \Rbb^n} & \quad x^T Q^{(0)} x + c^{(0)T} x \\
s.t.& \quad x^T \matQi x + \vecciT x \leq f^{(i)}, \quad i=1,...,m, \\
& \quad x_j \in [\ell_j, u_j], \ \forall j = 1,...,n. \\
\end{aligned}\tag{QCP}
\end{equation}
We leave the development of a full fledged MIQCP solver in subsequent works. See Section \ref{sec:conclude} for some further 
discussion and considerations.



In the preprocessing step, for any $Q^{(i)}$ not positive semidefinite we solve the following auxiliary semidefinite program (SDP)
\begin{equation}\label{SDP:DiagPerturb}
\min_{\delta^{(i)} \in \Rbb^n} \quad e^T \delta^{(i)} \quad s.t., \quad \matQi + \diag(\delta^{(i)}) \succeq 0, \tag{SDP}
\end{equation}
where $e$ is the vector of ones in $\Rbb^n$. This is an SDP with a very special form. It is the same as the dual of the well-known semidefinite relaxations to the Max-Cut problem \cite{GoWi95}. 
This problem structure can be effectively exploited by the solver DSDP \cite{DSDPAlg,dsdp-user-guide}. Another attractive
method (as solution accuracy is not a primary concern here) is a coordinate-minimization-based algorithm
proposed in \cite{DongNonconvexQP} where a simple rank-1 update operation is needed in each iteration.

One special case is that if $Q^{(i)}$ is diagonal (but nonconvex), then we can simply choose $\delta^{(i)}$ such that $Q^{(i)} + \diag(\delta^{(i)})=0$.

We use $\nu_j$ to denote the level of approximation to the set $\Scal_{[\ell_j, u_j]}$.  By convention
we define $D_0(\ell, u)$ and $D^+_0(\ell, u)$ as the following initial convex relaxation of $\Scal_{[\ell,u]}$:
\begin{align*}
D_0(\ell,u) &:= \left\{(x,y) \in \Rbb \times \Rbb_+ \ \middle| \ \begin{aligned} x \in [\ell,u],  \ \ y \leq (\ell+u) x - \ell u \\
y \geq 2 \ell x - \ell^2, \ \ y \geq 2 u x - u^2\\
\end{aligned}
\right\}, \\
D^+_0(\ell,u) &:= \left\{(x,y) \in \Rbb^2 \ \middle| \ \begin{aligned} x \in [\ell,u],  \ \ y \geq x^2 \\
y \leq (\ell+u) x - \ell u \\
\end{aligned}
\right\}.
\end{align*}
Starting with $\nu_j = 0 \ (j=1,...,n)$, in each iteration we solve the following (mixed-integer) convex quadratic program 
to globally optimality and increase $\nu_j$ for those variables determined to be ``important" (explained later). 
\begin{equation}\label{Relax}
\begin{aligned}
\min_{x \in \Rbb^n} & \quad  x^T \left(Q^{(0)} + \diag(\delta^{(0)}) \right) x + c^{(0)T} x - \delta^{(0)T} y \\
s.t.& \quad x^T \left(\matQi + \diag(\delta^{(i)})\right) x + \vecciT x \leq \scldi + \delta^{(i)T} y, \quad i=1,...,m, \\
& \quad (x_j, y_j) \in D_{\nu_j}(\ell_j, u_j) \quad (\mbox{ or } D^+_{\nu_j}(\ell_j, u_j) \ ).
\end{aligned}\tag{R}
\end{equation}
Note the optimal value to (\ref{Relax}) is always a valid lower bound of $\tau$, the optimal value of (\ref{QCP}), we use 
$\tau_{lower}$ to denote the best lower bound obtained so far.
Let $\left\{x^{(k)}_j, y^{(k)}_j\right\}_{j=1}^n$ to denote the optimal solution to (\ref{Relax}) found in iteration $k$. 
We then solve (\ref{QCP}) locally by using a nonlinear optimization algorithm with initial point $\{x_j^{(k)}\}_{j=1}^n$
to obtain a (hopefully) feasible solution to (\ref{QCP}). If feasible, the corresponding objective value is used to update $\tau_{upper}$, the best upper bound of $\tau$ so far.
We then increase $\nu_j$ (by 1) for the $T$ number of indices with largest violation scores $|y_j - x_j^2|$, unless
the corresponding violation score falls below some threshold $\varepsilon_{viol}$. 
We terminate our algorithm if $\tau_{upper}$ and $\tau_{lower}$ are sufficiently close or no $\nu_j$ is increased.
The pseudo code for this algorithm is presented in Algorithm \ref{alg:adref}.
\begin{algorithm}[ht]
\caption{An adaptive refinement algorithm for (\ref{QCP})}
\label{alg:adref}
\begin{algorithmic}
    \Require{Problem data $\left\{Q^{i}, c^{(i)}\right\}_{i=0}^m, \left\{f^{i}\right\}_{j=1}^m,$ and $\{\ell_j, u_j\}_{j=1}^n$}. Parameters 
    $T$, $\varepsilon_{viol}$, $\varepsilon_{gap}$.
    \State Preprocessing: Use a nonlinear optimization algorithm to solve (\ref{QCP}) locally; if a feasible solution is
	found, set $\tau_{upper}$ to be the corresponding objective value; otherwise $\tau_{upper} \leftarrow +\infty$;
                $\tau_{lower} \leftarrow -\infty$; \ $\nu_j \leftarrow 0$ for all $j=1,...,n$; 
    \For{$i=1,...,m$}
		\If{$Q^{(i)}$ is diagonal}
			$\delta^{(i)} \leftarrow -\diag(Q^{(i)})$;
		\ElsIf{$Q^{(i)}$ is positive semidefinite}
			$\delta^{(i)} \leftarrow 0 $;
		\Else
		\State Solve (\ref{SDP:DiagPerturb}) to find $\delta^{(i)}$;
		\EndIf
	\EndFor
    \For{$k=1,2,...$}
	\State Solve (\ref{Relax}) to global optimality; let $\{x_j^{(k)}, y_j^{(k)}\}_{j=1}^n$ to denote an optimal solution; and $\tau^{(k)}$
	to denote the optimal value of (\ref{Relax});
	\State $\tau_{lower} \leftarrow \max(\tau_{lower}, \tau^{(k)})$;
	\State Use a nonlinear optimization algorithm to solve (\ref{QCP}) locally with initial point $\{x_j^{(k)}\}_{j=1}^n$; if a feasible solution is
	found, let $\tau^{(k)}_{f}$	to denote the corresponding objective value;
	\State $\tau_{upper} \leftarrow \min(\tau_{upper}, \tau_f^{(k)})$;
	\State Terminate if $|\tau_{upper}-\tau_{lower}|/|\tau_{upper}| \leq \varepsilon_{opt}$;
	\State Let $j_1, ..., j_T$ be the indices for the $T$ largest entries in $\{ |y_j - x_j^2| \}_{j=1}^n$;
	\For{$j=j_1,j_2,...,j_T$}
		\If{$|y_j - x_j^2|>\varepsilon_{viol}$}
		\State $\nu_j \leftarrow \nu_j + 1$;
		\EndIf
	\EndFor
	\State Terminate if no $\nu_j$ is updated.
	\EndFor
\end{algorithmic}
\end{algorithm}

Note that either $D_{\nu_j}(\ell_j, u_j)$ or $D^+_{\nu_j}(\ell_j, u_j)$ can be used in the relaxation problem (\ref{Relax}). In our implementation,
if all $Q^{(i)}$ in the constraints of (\ref{QCP}) are either zero or diagonal, then we use $D_{\nu_j}(\ell_j, u_j)$ in (\ref{Relax}). In this case all constraints
in (\ref{Relax}) are linear (so that convex QP relaxations are solved at each node of the branch-and-bound). In all other cases we use $D^+_{\nu_j}(\ell_j, u_j)$ in (\ref{Relax}). We implement this algorithm with 
Julia + JuMP \cite{LubinDunningIJOC}, and use DSDP \cite{DSDPAlg,dsdp-user-guide} and Gurobi \cite{gurobi2018} to solve (\ref{SDP:DiagPerturb}) and (\ref{Relax}), 
respectively.
In all experiments described in the next section we 
set the parameters $T = 20$, $\varepsilon_{viol} = 10^{-5}$ and $\varepsilon_{gap} = 10^{-4}$.

\section{Computational Experiments}\label{sec:comp}

We now report our numerical results on 234 problem instances from the literature: 
the \textsc{boxqp} instances in \cite{BurerChen2013} and earlier papers, and \textsc{qcqp} instances 
included in the BARON test library \cite{BaronTestLibraryUrl}. 
Characteristics of such instances are summarized in the Table \ref{tab:instances}, where the ``Sparsity" column
represents the sparsity level in all quadratic forms. 
These
instances are sufficiently nonconvex, nonlinear and all bounded in a proper scale, therefore a good testbed 
for our purpose of evaluating whether our proposed approach is promising for dealing with challenging MIQCP problems. 
\begin{table}[htp]
\begin{center}
\begin{tabular}{|l|cccc|}
& \# instances & \#Vars & \#QuadCons & Sparsity \\
\hline
\textsc{boxqp} & 99 & 20--125 & 0 & 20\%--100\%\\
\textsc{qcqp}& 135 & 8--50 & 8--100 & 25\%--100\%\\
\hline
\end{tabular}
\end{center}
\caption{Characteristics of Test Instances}
\label{tab:instances}
\end{table}%

In all experiments below, if an algorithm cannot solve an instance to global optimality within the time limit, 
the corresponding relative gap is calculated as
\[
Gap = \frac{UpperBound - LowerBound}{|UpperBound|} \times 100\%.
\]
If both algorithms under comparison cannot solve an instance to global optimality within the time limit, we 
compare their lower bounds by the following percentage of 
``additional gap closed":
\[
AdditonalGapClosed = \frac{BetterLowerBound-WorseLowerBound}{BestUpperBound - WorseLowerBound} \times 100\%.
\]

As there is no quadratic constraint in the \textsc{boxqp} instances, they can be solved by 
Cplex (with ``Optimality Target" option set to be global) \cite{cplex12_7}. All other Cplex 
options are set to default. Figure \ref{fig:boxqp_res} summarizes the timing and gap 
comparison. We ran both Cplex and our adaptive refinement implementation (denoted by
``CDA") with a time limit of 1200 seconds. 

Among 99 \textsc{boxqp} instances, 81 of them can be solved to global optimality
(relative gap less than $10^{-4}$) by at least one method within the time limit. 
Figure \ref{fig:boxqpsub1} plots the Cplex running
time against CDA running time on such ``easier" instances. Any point in the region \textit{below} the 
diagonal straight line represents an instance where CDA is faster. Conclusion is that among
these easier instances, the running time for two methods are comparable. Although on the 
``easiest" instances (a cluster at the lower left corner), Cplex is faster. This is expected 
as our implementation is prototypical, and every (convex MIQCP) subproblem in the adaptive 
refinement procedure is resolved from the scratch. 

The advantage of CDA becomes apparent on more ``difficult" problems. Cplex leaves positive ($>10^{-4}$) gap 
on 24 instances with an average gap $13.14\%$, while CDA on 
23 instances with an average gap $1.20\%$.
On 18 instances which neither algorithm completes within the time limit, Cplex returns smaller gap on only 1 
instance ($\mbox{CplexGap}=2.54\%$ v.s. $\mbox{CDAGap}=2.56\%$). On all other 17 instances
(Figure \ref{fig:boxqpsub2}), 
CDA closes significantly portions of the gap left by Cplex ($\approx 90\%$ in average). 
\begin{figure}[htbp]
\begin{center}
\begin{subfigure}{.45\textwidth}
  \centering
  \includegraphics[width=\linewidth]{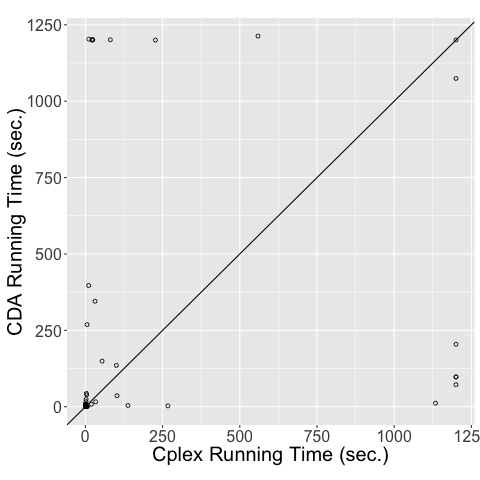}
  \caption{81 instances solved by at least one method.}
  \label{fig:boxqpsub1}
\end{subfigure}%
\begin{subfigure}{.45\textwidth}
  \centering
  \includegraphics[width=\linewidth]{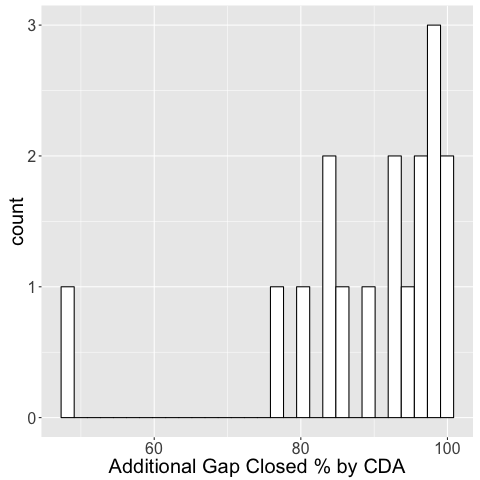}
  \caption{17 instances solved by neither method.}
  \label{fig:boxqpsub2}
\end{subfigure}
\caption{\textsc{boxqp} results summary}
\label{fig:boxqp_res}
\end{center}
\end{figure}

\begin{figure}[htbp]
\begin{center}
\begin{subfigure}{.45\textwidth}
  \centering
  \includegraphics[width=\linewidth]{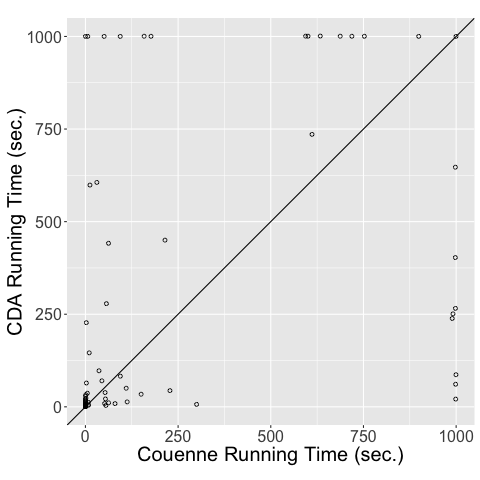}
  \caption{94 instances solved by at least one method.}
  \label{fig:qcqp_sub1}
\end{subfigure}%
\begin{subfigure}{.45\textwidth}
  \centering
  \includegraphics[width=\linewidth]{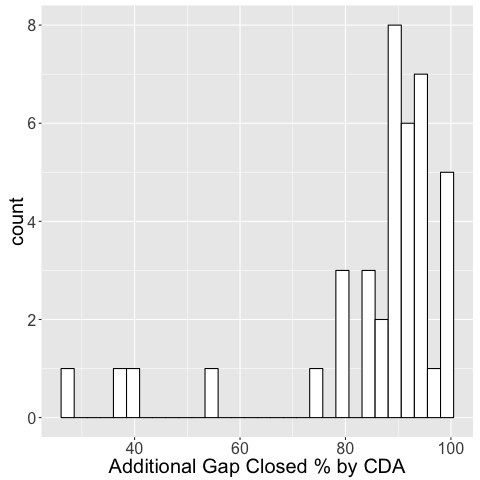}
  \caption{40 instances solved by neither method.}
  \label{fig:qcqp_sub2}
\end{subfigure}
\caption{\textsc{qcqp} results summary}
\label{fig:qcqp_res}
\end{center}
\end{figure}

Figure \ref{fig:qcqp_res} summarizes similar experiments of running 
CDA and Couenne 0.5 \cite{couenne} on the \textsc{qcqp} instances with every similar
conclusions. Among 135 \textsc{qcqp} instances, 81 of them were solved to sufficient accuracy by at least one 
method within the time limit (1000 seconds). Figure \ref{fig:qcqp_sub1} plots the Couenne 
running
time against CDA running time, and any point in the region \textit{below} the 
diagonal line represents an instance CDA is faster. Again the timing comparison 
on such instances is mixed, although on the 
``easiest" instances (a cluster at the lower left corner) Couenne is faster, which is again expected. 
Couenne leaves positive ($>10^{-4}$) gap 
on 49 instances with an average gap $60.39\%$, while CDA on 
55 instances with an average gap $5.24\%$.
On 41 instances which neither algorithm completes within the time limit, Couenne returns smaller gap on only 1 
instance ($\mbox{CouenneGap}=2.39\%$ v.s. $\mbox{CDAGap}=12.78\%$). On all other 40 instances
(Figure \ref{fig:qcqp_sub2}), 
CDA closes significantly portions of the gap left by Couenne (more than $80\%$ in average). 

To compare with commercial global optimization softwares such as BARON and ANTIGONE, we manually 
upload some larger \textsc{qcqp} instances to the NEOS server (\url{https://neos-server.org/}) and solve with a time limit of 1000 
seconds. The obtained lower bounds and the best upper bound (``BestObj" column) are listed in  Table \ref{tab:baron_antigone}.
While BARON and ANTIGONE typically provide much tighter bounds than Couenne 0.5, CDA still provides the strongest lower bounds
on all but one instance. The improvement is especially significant on larger instances (with 50 variables).
\begin{table}[htp]
\begin{center}
\begin{tabular}{lr|r|r|r|r|r|}
& & \multicolumn{4}{c|}{Lower Bounds} \\
Instance & BestObj & Couenne & BARON & ANTIGONE & CDA \\
\hline
unitbox\_c\_40\_80\_3\_100   & -84.084 & -175.238 & -92.790 & -93.169&  $-92.768^*$\\
unitbox\_c\_40\_80\_3\_50   & -49.471 & -81.459 &  -71.538& -56.350 & $-56.040^*$ \\
unitbox\_c\_48\_96\_2\_25   & -38.414 & -53.616 & -47.431 &$-42.176^*$ & -45.145 \\
unitbox\_c\_50\_50\_1\_100   & -101.038 & -283.876 & -129.895 & -131.035 & $-109.927^*$ \\
unitbox\_c\_50\_50\_1\_50    &-57.959&-123.338&-104.537& -79.899 & $-65.282^*$ \\
unitbox\_c\_50\_50\_2\_100   & -77.774 & -263.097 & -104.130 & -112.472&$-88.448^*$ \\
unitbox\_c\_50\_100\_1\_100   & -95.198  & -281.132 & -117.214&-124.912 &$-105.009^*$ \\
unitbox\_c\_50\_100\_1\_50   & -86.392  & -148.437 &-120.065 & -105.205& $-96.378^*$\\
\hline
\end{tabular}
\end{center}
\caption{BARON/ANTIGONE lower bounds on large \textsc{qcqp} instances (* - best lower bounds)}
\label{tab:baron_antigone}
\end{table}%

\section{Conclusions and Future Work}\label{sec:conclude}
In this paper we proposed a new approach, based on novel compact lifted mixed-integer approximation sets to
the bounded square function, to solve nonconvex MIQCP to arbitrary precision with MISOCP solvers.
This approach exploits efficient softwares and algorithmic progresses in MISOCP. 
The compact approximation sets are
derived by embedding the square function into a second-order cone and exploiting rotational symmetry.
We further characterize approximation precisions and provide disjunctive interpretations without lifted variables.
Finally, we implement a prototypical adaptive refinement algorithm for continuous QCPs. 
Preliminary numerical
experiments show that our implementation can close a significant portion
of gap left by state-of-the-art global solvers on more difficult problems, indicating 
strong promises of our proposed approach.

The adaptive refinement algorithm described in Section \ref{sec:adaptive_refine}
serves the purpose of prototyping, 
and may not be the most efficient way of employing the proposed approximation sets $D_\nu$
and $D_\nu^+$. For one thing, MISOCPs in iterations are solved from the scratch as it is not possible
to reuse detailed information of previous branch-and-bound trees. A potentially better implementation strategy
is to start with sufficiently accurate approximations and use specialized branching rules. 
Incorporation of processing techniques such as (feasibility and optimality-based) bound tightening 
is necessary for a robust algorithm for general MIQCPs.
Other interesting questions include how to incorporate the power of RLT inequalities in our framework
and how to (most effectively) generalize to mixed-integer polynomial optimization. 
We plan to address these questions and to develop a full fledged solvers in future work.

\bibliographystyle{plain}
\bibliography{miqcp}

\begin{thebibliography}{10}

\bibitem{kurt10_QCQP}
Kurt~M. Anstreicher.
\newblock {On convex relaxations for quadratically constrained quadratic
  programming}.
\newblock {\em Mathematical Programming (Series B)}, {2012}.

\bibitem{AnsBur07}
Kurt~M. Anstreicher and Samuel Burer.
\newblock Computable representations for convex hulls of low-dimensional
  quadratic forms.
\newblock {\em {Mathematical Programming}}, 124(1-2):33--43, 2010.

\bibitem{Balas1998}
Egon Balas.
\newblock Disjunctive programming: properties of the convex hull of feasible
  points.
\newblock {\em Discrete Appl. Math.}, 89(1-3):3--44, 1998.

\bibitem{BaoSahinidisTawarmalani2009}
Xiaowei Bao, Nikolaos~V. Sahinidis, and Mohit Tawarmalani.
\newblock Multiterm polyhedral relaxations for nonconvex, quadratically
  constrained quadratic programs.
\newblock {\em Optimization Methods and Software}, 24(4-5):485--504, 2009.

\bibitem{couenne}
Pietro Belotti.
\newblock {COUENNE: a user's manual}.
\newblock Technical report, Department of Mathematical Sciences, Clemson
  University, 2012.

\bibitem{CouennePaper}
Pietro Belotti, Jon Lee, Leo Liberti, Fran{\c c}ois Margot, and Andreas
  W\"{a}chter.
\newblock {Branching and bounds tightening techniques for non-convex MINLP}.
\newblock {\em {Optim. Methods and Software}}, 24(4-5):597--634, 2009.

\bibitem{BN01}
Aharon Ben-Tal and Arkadi Nemirovski.
\newblock On polyhedral approximations of the second-order cone.
\newblock {\em Math. Oper. Res.}, 26(2):193--205, 2001.

\bibitem{dsdp-user-guide}
Steven~J. Benson and Yinyu Ye.
\newblock {DSDP5} user guide --- software for semidefinite programming.
\newblock Technical Report ANL/MCS-TM-277, Mathematics and Computer Science
  Division, Argonne National Laboratory, Argonne, IL, September 2005.
\newblock \url{http://www.mcs.anl.gov/~benson/dsdp}.

\bibitem{DSDPAlg}
Steven~J. Benson, Yinyu Ye, and Xiong Zhang.
\newblock {Solving Large-Scale Sparse Semidefinite Programs for Combinatorial
  Optimization}.
\newblock {\em SIAM Journal on Optimization}, 10(2):443--461, 2000.

\bibitem{Hande_MISOCP}
{Benson, Hande Y., and \:{U}mit Sa\^{g}lam}.
\newblock {Mixed-integer second-order cone programming: A survey}.
\newblock In {\em Theory Driven by Influential Applications}, pages 13--36.
  INFORMS, 2013.

\bibitem{SCIP-MIQCP-2012}
Timo Berthold, Stefan Heinz, and Stefan Vigerske.
\newblock {Extending a CIP Framework to Solve MIQCPs}.
\newblock In Jon Lee and Sven Leyffer, editors, {\em Mixed Integer Nonlinear
  Programming}, pages 427--444, New York, NY, 2012. Springer New York.

\bibitem{Bur09}
S.~Burer.
\newblock On the copositive representation of binary and continuous nonconvex
  quadratic programs.
\newblock {\em Mathematical Programming}, 120:479--495, 2009.

\bibitem{Burer2011}
Samuel Burer and Hongbo Dong.
\newblock Representing quadratically constrained quadratic programs as
  generalized copositive programs.
\newblock {\em Operations Research Letters}, 40(3):203--206, {May} 2012.

\bibitem{BurerChen2013}
Samuel~A. Burer and Jieqiu Chen.
\newblock {Globally solving nonconvex quadratic programming problems via
  completely positive programming}.
\newblock {\em Mathematical Programming Computation}, 4:33--52, 2012.

\bibitem{BynumCastilloWatsonLaird2018}
M.~Bynum, A.~Castillo, J.~Watson, and C.~D. Laird.
\newblock {Tightening McCormick Relaxations Toward Global Solution of the ACOPF
  Problem}.
\newblock {\em IEEE Transactions on Power Systems}, 2018.

\bibitem{CainOneillCastillo}
Mary~B. Cain, Richard~P. O'neill, and Anya Castillo.
\newblock History of optimal power flow and formulations.
\newblock Technical report, Cain, Mary B. and Richard P. O'neill and Anya
  Castillo, 2012.

\bibitem{DongNonconvexQP}
Hongbo Dong.
\newblock Relaxing nonconvex quadratic functions by multiple adaptive diagonal
  perturbations.
\newblock {\em SIAM Journal on Optimization}, 26(3):1962--1985, 2016.

\bibitem{DuranGrossmann1986}
M.~Duran and I.~Grossmann.
\newblock {An outer-approximation algorithm for a class of mixedinteger
  nonlinear programs}.
\newblock {\em Mathematical Programming}, 36(3):307--339, 1986.

\bibitem{FuShahidehpourLi2005}
Yong Fu, Mohammad Shahidehpour, and Zuyi Li.
\newblock {Security-Constrained Unit Commitment With AC Constraints}.
\newblock {\em IEEE Transactions on Power Systems}, 20(3):1538--1550, 2005.

\bibitem{QPLIB}
Fabio Furini, Emiliano Traversi, Pietro Belotti, Antonio Frangioni, Ambros
  Gleixner, Nick Gould, Leo Liberti, Andrea Lodi, Ruth Misener, Hans
  Mittelmann, Nikolaos Sahinidis, Stefan Vigerske, and Angelika Wiegele.
\newblock Qplib: A library of quadratic programming instances.
\newblock {\em Mathematical Programming Computation}, 2018.

\bibitem{GhaddarMarecekMevissen}
Bissan Ghaddar, Jakub Marecek, and Martin Mevissen.
\newblock Optimal power flow as a polynomial optimization problem.
\newblock {\em {IEEE transaction on Power Systems}}, 31(1):539--546, Jan 2016.

\bibitem{GleixnerEtal2018ZR}
Ambros Gleixner, Michael Bastubbe, Leon Eifler, Tristan Gally, Gerald Gamrath,
  Robert~Lion Gottwald, Gregor Hendel, Christopher Hojny, Thorsten Koch,
  Marco~E. L{\"u}bbecke, Stephen~J. Maher, Matthias Miltenberger, Benjamin
  M{\"u}ller, Marc~E. Pfetsch, Christian Puchert, Daniel Rehfeldt, Franziska
  Schl{\"o}sser, Christoph Schubert, Felipe Serrano, Yuji Shinano, Jan~Merlin
  Viernickel, Matthias Walter, Fabian Wegscheider, Jonas~T. Witt, and Jakob
  Witzig.
\newblock {The SCIP Optimization Suite 6.0}.
\newblock ZIB-Report 18-26, Zuse Institute Berlin, July 2018.

\bibitem{GoWi95}
M.X. Goemans and D.P. Williamson.
\newblock Improved approximation algorithms for maximum cut and satisfiability
  problems using semidefinite programming.
\newblock {\em Journal of ACM}, 42:1115--1145, 1995.

\bibitem{gurobi2018}
{Gurobi Optimization, Inc.}
\newblock {Gurobi Optimizer Reference Manual}, 2018.

\bibitem{HuchetteVielma}
Joey Huchette and Juan~Pablo Vielma.
\newblock {A combinatorial approach for small and strong formulations of
  disjunctive constraints}.
\newblock To appear in Mathematics of Operations Research, 2018.

\bibitem{cplex12_7}
{IBM Corporation}.
\newblock {\em IBM ILOG CPLEX Optimization Studio 12.7, User Manual}, 2017.

\bibitem{Kocuk_Strong_2016}
Burak Kocuk, Santanu Dey, and X~Sun.
\newblock Strong {SOCP} relaxations for the optimal power flow problem.
\newblock {\em Oper Res}, 2016.

\bibitem{Kocuk2018}
Burak Kocuk, Santanu~S. Dey, and X.~Andy Sun.
\newblock Matrix minor reformulation and socp-based spatial branch-and-cut
  method for the ac optimal power flow problem.
\newblock {\em Mathematical Programming Computation}, 10(4):557--596, Dec 2018.

\bibitem{BurakDeySun2017}
Burak Kocuk, Santanu~S. Dey, and Xu~Andy Sun.
\newblock {New formulation and strong MISOCP relaxations for AC optimal
  transmission switching problem}.
\newblock {\em IEEE Transactions on Power Systems}, 32(6):4161--4170, 2017.

\bibitem{KrislockMalickRoupin2013}
Nathan Krislock, J\'{e}r\^{o}me Malick, and Fr\'{e}d\'{e}ric Roupin.
\newblock Improved semidefinite branch-and-bound algorithm for k-cluster.
\newblock Revision submitted to Computers and Operations Research,
  \url{https://hal.archives-ouvertes.fr/hal-00717212}, {Nov.} 2013.

\bibitem{Krislock:2014}
Nathan Krislock, J{\'e}r{\^o}me Malick, and Fr{\'e}d{\'e}ric Roupin.
\newblock {Improved semidefinite bounding procedure for solving Max-Cut
  problems to optimality}.
\newblock {\em Mathematical Programming}, 143(1-2):61--86, 2014.

\bibitem{LeyfferPhDThesis}
Sven Leyffer.
\newblock {Deterministic methods for mixed integer nonlinear programming}.
\newblock {Ph.D. thesis, University of Dundee}, 1993.

\bibitem{Lindo2009}
Youdong Lin and Linus Schrage.
\newblock {The global solver in the LINDO API}.
\newblock {\em {Optimization Methods \& Software}}, 24(4--5):657--668, {Aug.}
  2009.

\bibitem{LubinDunningIJOC}
Miles Lubin and Iain Dunning.
\newblock Computing in operations research using julia.
\newblock {\em INFORMS Journal on Computing}, 27(2):238--248, 2015.

\bibitem{ANTIGONE2014}
R.~Misener and C.~A. Floudas.
\newblock {ANTIGONE: Algorithms for coNTinuous / Integer Global Optimization of
  Nonlinear Equations}.
\newblock {\em Journal of Global Optimization}, 2014.

\bibitem{Misener_Global_2012}
Ruth Misener and Christodoulos Floudas.
\newblock Global optimization of mixed-integer quadratically-constrained
  quadratic programs {(MIQCQP)} through piecewise-linear and edge-concave
  relaxations.
\newblock {\em Math Program}, 136(1):155--182, 2012.

\bibitem{PVZ2015}
Javier Pe{\~{n}}a, Juan~C. Vera, and Luis~F Zuluaga.
\newblock {Completely positive reformulations for polynomial optimization}.
\newblock {\em Mathematical Programming}, 151(2):405--431, Jul. 2015.

\bibitem{PolReWol95}
S.~Poljak, F.~Rendl, and H.~Wolkowicz.
\newblock A recipe for semidefinite relaxation for 0-1 quadratic programming.
\newblock {\em Journal of Global Optimization}, 7:51--73, 1995.

\bibitem{RendlRinaldiWiegele10}
Franz Rendl, Giovanni Rinaldi, and Angelika Wiegele.
\newblock {Solving Max-Cut to optimality by intersecting semidefinite and
  polyhedral relaxations}.
\newblock {\em {Math. Program., Ser. A}}, 121:307--335, 2010.

\bibitem{baron17.8.9}
N.~V. Sahinidis.
\newblock {BARON 17.8.9: Global Optimization of Mixed-Integer Nonlinear
  Programs}.
\newblock {User's manual}, 2017.

\bibitem{SBL08b}
Anureet Saxena, Pierre Bonami, and Jon Lee.
\newblock Convex relaxations of mixed integer quadratically constrained
  programs: Extended formulations.
\newblock Research report, IBM, Yorktown Heights, NY, 2008.
\newblock To appear in {\em Mathematical Programming\/}.

\bibitem{SBL08c}
Anureet Saxena, Pierre Bonami, and Jon Lee.
\newblock Convex relaxations of mixed integer quadratically constrained
  programs: Projected formulations.
\newblock {\em Mathematical Programming, Series A}, 130(2):359--413, 2011.

\bibitem{Sho87}
N.Z. Shor.
\newblock Quadratic optimization problems.
\newblock {\em Soviet Journal of Computer and Systems Science}, 25:1--11, 1987.
\newblock Originally published in {\it Tekhnicheskaya Kibernetika}, 1:128--139,
  1987.

\bibitem{BaronTestLibraryUrl}
{The Optimization Firm}.
\newblock {NLP and MINLP Test Problems}.

\bibitem{VielmaSIAM2015}
J.~P. Vielma.
\newblock {Mixed integer linear programming formulation techniques}.
\newblock {\em SIAM Review}, 57:3--57, 2015.

\bibitem{VielmaNemhauser2011}
J.~P. Vielma and G.~Nemhauser.
\newblock {Modeling disjunctive constraints with a logarithmic number of binary
  variables and constraints}.
\newblock {\em Mathematical Programming}, 128:49--72, 2011.

\bibitem{Vielma08}
Juan~Pablo Vielma, Shabbir Ahmed, and George~L. Nemhauser.
\newblock A lifted linear programming branch-and-bound algorithm for
  mixed-integer conic quadratic programs.
\newblock {\em INFORMS J. Comput.}, 20(3):438--450, 2008.

\bibitem{SCIP-2018}
Stefan Vigerske and Ambros Gleixner.
\newblock Scip: global optimization of mixed-integer nonlinear programs in a
  branch-and-cut framework.
\newblock {\em Optimization Methods and Software}, 33(3):563--593, 2018.

\bibitem{KrokhmalVinel2014}
A.~Vinel and P.~Krokhmal.
\newblock {Polyhedral approximations in $p$-order cone programming}.
\newblock {\em {Optim. Methods and Software}}, 160:439--456, 2014.

\end{thebibliography}
\end{document}